\documentclass[12pt]{amsart}

\usepackage{amssymb}
\usepackage{ifthen}
\usepackage{xargs}
\usepackage{xspace}



\makeatletter
\@namedef{subjclassname@2010}{%
  \textup{2010} Mathematics Subject Classification}
\makeatother

\newtheorem{lemma}{Lemma}[section]
\newtheorem{proposition}[lemma]{Proposition}
\newtheorem{theorem}[lemma]{Theorem}
\newtheorem{introtheorem}{Theorem}

\theoremstyle{definition}


\textwidth=13.5cm
\textheight=23cm
\parindent=16pt
\oddsidemargin=-0.5cm
\evensidemargin=-0.5cm
\topmargin=-0.5cm

\newcommand{\definedterm}[1]{\emph{#1}}

\newcommand{\Bairespace}[1][]{
  \ifthenelse{\equal{#1}{}}{\functions{\N}{\N}}{\functions{#1}{\N}}
}
\newcommand{\Bairetree}[1][]{
  \ifthenelse{\equal{#1}{}}{\functions{<\N}{\N}}{\functions{#1}{\N}}
}
\newcommand{\ball}[3]{\calB_{#1}(#2, #3)}
\newcommand{\calB}{\mathcal{B}}
\newcommand{\calN}{\mathcal{N}}
\newcommand{\Cantorspace}[1][]{
  \ifthenelse{\equal{#1}{}}{\functions{\N}{2}}{\functions{#1}{2}}
}
\newcommand{\Cantortree}[1][]{
  \ifthenelse{\equal{#1}{}}{\functions{<\N}{2}}{\functions{#1}{2}}
}
\newcommand{\closure}[2][]{
  \ifthenelse{\equal{#1}{}}{\overline{#2}}{\overline{#2}^{\thinspace #1}}
}
\newcommand{\composition}{\circ}
\newcommandx{\concatenation}[2][1 = undefined, 2 = undefined]{
  \ifthenelse{\equal{#1}{undefined}}{{}\smallfrown}{
    \ifthenelse{\equal{#2}{undefined}}{\bigoplus #1}{\bigoplus_{#1} #2}
  }
}
\newcommandx{\Deltaclass}[2][1=,2=]{
  \ifthenelse{\equal{#2}{}}{\mathbf{\Delta}_{#1}}{\mathbf{\Delta}^{#1}_{#2}}
}
\newcommand{\diameter}[2]{\mathrm{diam}_{#1}(#2)}
\newcommand{\emptysequence}{\emptyset}
\newcommand{\extendedBairespace}{\Bairespace[\le \N]}
\newcommand{\extendedby}{\sqsubseteq}
\newcommand{\extendedextensions}[1]{\extensions{#1}^*}
\newcommand{\extensions}[1]{\calN_{#1}}
\newcommand{\fone}{f_1}
\newcommand{\from}{\colon}
\newcommand{\Fsigma}{$F_\sigma$\xspace}
\newcommandx{\functions}[3][3 =]{
  \ifthenelse{\equal{#3}{}}{#2^{#1}}{#2^{#1}_{#3}}
}
\newcommand{\fzero}{f_0}
\newcommand{\Gdelta}{$G_\delta$\xspace}
\newcommand{\goesto}{\rightarrow}
\newcommand{\graph}[1]{\mathrm{graph}(#1)}
\newcommand{\Gzero}{\mathbb{G}_0}
\newcommand{\Gzerolittle}{\mathbb{G}_0^{\Deltaclass[0][2]}}
\newcommand{\GzeroN}[1][]{
  \ifthenelse{\equal{#1}{}}{\mathbb{G}_0^\N}{\mathbb{G}_{0, #1}^{\N}}
}
\newcommand{\horizontalsection}[2]{#1^{#2}}
\newcommand{\image}[2]{#1(#2)}
\newcommand{\incompatible}{\perp}
\newcommandx{\intersection}[2][1 =, 2 =]{
  \ifthenelse{\equal{#1}{}}{\cap}{
    \ifthenelse{\equal{#2}{}}{\bigcap #1}{{\bigcap_{#1} #2}}
  }
}
\newcommand{\length}[1]{|#1|}
\newcommand{\mathand}{\text{ and }}
\newcommand{\N}{\mathbb{N}}
\newcommand{\pair}[2]{(#1, #2)}
\newcommandx{\Piclass}[2][1=,2=]{
  \ifthenelse{\equal{#2}{}}{\mathbf{\Pi}_{#1}}{\mathbf{\Pi}^{#1}_{#2}}
}
\newcommand{\preimage}[2]{#1^{-1}(#2)}
\newcommandx{\projection}[2][1 =, 2 =]{
  \ifthenelse{\equal{#1}{}}{\mathrm{proj}}{
    \ifthenelse{\equal{#2}{}}{\projection_{#1}}{
      \image{\projection[#1]}{#2}
    }
  }
}
\renewcommand{\restriction}[2]{#1 \upharpoonright #2}
\newcommandx{\sequence}[2][2 = undefined]{
  \ifthenelse{\equal{#2}{undefined}}{(#1)}{
    (#1)_{#2}
  }
}
\newcommandx{\set}[2][2 = undefined]{
  \ifthenelse{\equal{#2}{undefined}}{\{ #1 \}}{
    \{ #1 \suchthat #2 \}
  }
}
\newcommandx{\Sigmaclass}[2][1=,2=]{
  \ifthenelse{\equal{#2}{}}{\mathbf{\Sigma}_{#1}}{\mathbf{\Sigma}^{#1}_{#2}}
}
\newcommand{\strictlyextendedby}{\sqsubset}
\newcommand{\strongextendedBairespace}{\extendedBairespace_{**}}
\newcommand{\suchthat}{\mid}
\newcommand{\R}{\mathbb{R}}

\newcommandx{\union}[2][1 =, 2 =]{
  \ifthenelse{\equal{#1}{}}{\cup}{
    \ifthenelse{\equal{#2}{}}{\bigcup #1}{{\bigcup_{#1} #2}}
  }
}

\newcommand{\weakextendedBairespace}{\extendedBairespace_*}

\newcommand{\Baire}{Baire\xspace}
\newcommand{\Borel}{Bor\-el\xspace}
\newcommand{\Hurewicz}{Hur\-e\-wicz\xspace}
\newcommand{\Jayne}{Jayne\xspace}
\newcommand{\Lecomte}{Lec\-omte\xspace}

\newcommand{\Polish}{Po\-lish\xspace}
\newcommand{\Rogers}{Rog\-ers\xspace}
\newcommand{\SaintRaymond}{Saint Ray\-mond\xspace}
\newcommand{\Solecki}{Sol\-eck\-i\xspace}
\newcommand{\Zeleny}{Zel\-e\-ny\xspace}

\begin{document}


\baselineskip=17pt

\title[Sigma-continuity]{Sigma-continuity with closed witnesses}

\author[R. Carroy]{Rapha\"{e}l Carroy}
\address{
  Rapha\"{e}l Carroy \\
  Kurt G\"{o}del Research Center for Mathematical Logic \\
  Universit\"{a}t Wien \\
  W\"{a}hringer Stra{\ss}e 25 \\
  1090 Wien \\
  Austria
}
\email{raphael.carroy@univie.ac.at}
\urladdr{
  http://www.logique.jussieu.fr/~carroy/indexeng.html
}

\author[B.D. Miller]{Benjamin D. Miller}
\address{
  Benjamin D. Miller \\
  Kurt G\"{o}del Research Center for Mathematical Logic \\
  Universit\"{a}t Wien \\
  W\"{a}hringer Stra{\ss}e 25 \\
  1090 Wien \\
  Austria
 }
\email{benjamin.miller@univie.ac.at}
\urladdr{
  http://www.logic.univie.ac.at/benjamin.miller
}

\date{}

\begin{abstract}
  We use variants of the $\Gzero$ dichotomy to establish a
  refinement of \Solecki's basis theorem for the family of
  \Baire-class one functions which are not $\sigma$-continuous
  with closed witnesses.
\end{abstract}

\subjclass[2010]{Primary 03E15, 26A21, 28A05, 54H05}

\keywords{Basis, closed, continuous, dichotomy, embedding}

\maketitle

\section*{Introduction}

A subset of a topological space is \definedterm{\Fsigma} if it is a union
of countably-many closed sets, \definedterm{\Borel} if it is in the
$\sigma$-algebra generated by the closed sets, and \definedterm
{analytic} if it is a continuous image of a closed subset of $\Bairespace$.

A function between topological spaces is \definedterm
{$\sigma$-continuous with closed witnesses} if its domain is a union of
countably-many closed sets on which it is continuous, \definedterm{\Baire
class one} if preimages of open sets are \Fsigma, \definedterm{strongly
$\sigma$-closed-to-one} if its domain is a union of countably-many analytic
sets intersecting the preimage of each singleton in a closed set,
\definedterm{\Fsigma-to-one} if the preimage of each singleton is
\Fsigma, and \definedterm{\Borel} if preimages of open sets are \Borel.

A \definedterm{topological embedding} of a topological space $X$ into
a topological space $Y$ is a function $\pi \from X \to Y$ which is a
homeomorphism onto its image, where the latter is endowed
with the subspace topology. A \definedterm{topological embedding}
of a set $A \subseteq X$ into a set $B \subseteq Y$ is a topological
embedding $\pi$ of $X$ into $Y$ such that $A = \preimage{\pi}{B}$.
A \definedterm{topological embedding} of a function
$f \from X \to Y$ into a function $f' \from X' \to Y'$ is a pair $\pair{\pi_X}
{\pi_Y}$, consisting of topological embeddings $\pi_X$ of $X$ into $X'$
and $\pi_Y$ of $\image{f}{X}$ into $\image{f'}{X'}$, with $f'
\composition \pi_X = \pi_Y \composition f$.

Some time ago, \Jayne-\Rogers showed that a function between \Polish 
spaces is $\sigma$-continuous with closed witnesses if and only if
preimages of closed sets are \Fsigma (see \cite[Theorem 1]{JayneRogers}).
\Solecki later refined this result by providing a two-element basis,
under topological embeddability, for the family of \Baire-class one
functions which do not have this property (see \cite[Theorem 3.1]{Solecki}).
Here we use variants of the $\Gzero$ dichotomy (see \cite
{KechrisSoleckiTodorcevic}) to establish a pair of dichotomies
which together refine \Solecki's theorem.

In \S\ref{Baireclassone}, we provide a simple characterization of
\Baire-class one functions that is used throughout the remainder of the
paper. As a first application, we use the \Lecomte-\Zeleny $\Deltaclass
[0][2]$-measurable analog of the $\Gzero$ dichotomy theorem (see
\cite[Corollary 4.5]{LecomteZeleny}) to establish that the property of being
\Baire class one is determined by behaviour on countable sets.

In \S\ref{Fsigmatoone}, we use the \Hurewicz dichotomy theorem to
provide a one-element basis, under topological embeddability, for the
family of \Baire-class one functions which are not \Fsigma-to-one. To
be precise, let $\weakextendedBairespace$ denote the set
$\extendedBairespace$ equipped with the smallest topology making
the sets $\extendedextensions{s} = \set{t \in \extendedBairespace}[s
\extendedby t]$ clopen for all $s \in \Bairetree$, and fix a function $f_0 \from
\weakextendedBairespace \to \R$ such that $\restriction{f_0}{\Bairespace}$
has constant value zero and $\restriction{f_0}{\Bairetree}$ is an injection into
$\set{1 / n}[n \in \N]$.

\begin{introtheorem} \label{first}
  Suppose that $X$ and $Y$ are \Polish spaces and $f \from X \to Y$ is
  a \Baire-class one function. Then exactly one of the following holds:
  \begin{enumerate}
    \item The function $f$ is \Fsigma-to-one.
    \item There is a topological embedding of $\fzero$ into $f$.
  \end{enumerate}
\end{introtheorem}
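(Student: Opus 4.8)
The plan is to prove the two implications separately; exclusivity is quick, and the substance is a recursive construction.

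\emph{The alternatives are incompatible.} Suppose $\pair{\pi_X}{\pi_Y}$ is a topological embedding of $\fzero$ into $f$ and put $y = \pi_Y(0)$. Since $\pi_Y$ is injective and $f \composition \pi_X = \pi_Y \composition \fzero$, one has $\preimage{\pi_X}{\preimage{f}{\set{y}}} = \preimage{\fzero}{\set{0}} = \Bairespace$. But $\Bairespace$ is not \Fsigma in $\weakextendedBairespace$: any set closed in $\weakextendedBairespace$ and contained in $\Bairespace$ is nowhere dense in the Polish space $\Bairespace$, since its interior in $\Bairespace$, if non-empty, would contain a non-empty $\extendedextensions{s} \cap \Bairespace$, whose closure in $\weakextendedBairespace$ is the clopen set $\extendedextensions{s}$, meeting $\Bairetree$ — and $\Bairespace$ is not a countable union of nowhere dense sets. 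As $\pi_X$ is continuous, $\preimage{f}{\set{y}}$ is therefore not \Fsigma, so $f$ is not \Fsigma-to-one.

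\emph{From the failure of $(1)$ to $(2)$.} Suppose $f$ is not \Fsigma-to-one and fix $y$ with $G := \preimage{f}{\set{y}}$ not \Fsigma; as $f$ is \Baire class one, $G$ is $G_\delta$. The \Hurewicz dichotomy theorem then yields a Cantor set $C \subseteq X$ for which $E := C \setminus G$ is countable and dense in $C$; thus $f$ is constantly $y$ on the comeager set $C \cap G$ and is different from $y$ at every point of $E$. The feature to isolate is that $f$ assumes infinitely many values on $E \cap B$ for every non-empty relatively clopen $B \subseteq C$: for $z \neq y$ the set $\preimage{f}{\set{z}} \cap C$ is a countable $G_\delta$, hence nowhere dense in the perfect space $C$, so if $f$ took only finitely many values on the dense subset $E \cap B$ of $B$, that set would be a finite union of nowhere dense sets — impossible.

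\emph{The construction.} Fix compatible metrics on $C$ and on $Y$, a decreasing sequence $(\varepsilon_k)$ of positive reals tending to $0$, and write $n_s$ for the index with $\fzero(s) = 1 / n_s$, so that $s \mapsto n_s$ is injective on $\Bairetree$. Enumerating $\Bairetree$ so that each sequence is listed after its immediate predecessor, I would recursively build non-empty relatively clopen sets $B_s \supseteq B_s'$ of $C$ and points $x_s \in E$ with $B_\emptysequence = C$; with $B_s' \subseteq B_s$ of diameter at most $2^{-\length{s}}$ and $\image{f}{B_s'}$ inside the $\varepsilon_{n_s}$-ball around $y$; with $x_s \in B_s' \cap E$ chosen so that $f(x_s)$ differs from $f(x_t)$ for every earlier $t$; and with $B_s' \setminus \set{x_s}$ partitioned into non-empty clopen sets $B_{s ^\frown k}$, $k \in \N$, arranged as concentric clopen annuli about $x_s$, so that every neighbourhood of $x_s$ contains all but finitely many of them. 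Each step is possible by Baire category: the $f$-preimage of the $\varepsilon_{n_s}$-ball about $y$ is \Fsigma and contains the dense subset $B_s \cap G$ of $B_s$, hence contains a non-empty clopen subset of $B_s$, which may be shrunk to have small diameter and still meet the dense set $E$; by the previous paragraph $f$ takes infinitely many values on that set's intersection with $E$ while only finitely many are forbidden, so a valid $x_s$ exists; and $B_s' \setminus \set{x_s}$, a non-closed open subset of a zero-dimensional compact metric space, splits into such a family of annuli.

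\emph{The embedding, and the obstacle.} Define $\pi_X(s) = x_s$ for $s \in \Bairetree$, and for $\alpha \in \Bairespace$ let $\pi_X(\alpha)$ be the unique point of $\bigcap_{n \in \N} B_{\restriction{\alpha}{n}}$ (a single point, by compactness and the diameter bounds). Injectivity of $s \mapsto n_s$ gives $n_{\restriction{\alpha}{n}} \to \infty$, and $\pi_X(\alpha) \in B_{\restriction{\alpha}{n}}'$ for every $n$ then forces $f(\pi_X(\alpha)) = y$. The main technical point is verifying that $\pi_X$ is a topological embedding: injectivity and continuity come from disjointness of the $B_s$ along incompatible sequences together with the annulus and diameter conditions, and openness onto the image from the identity $\image{\pi_X}{\extendedextensions{s}} = B_s \cap \image{\pi_X}{\weakextendedBairespace}$, using that $B_s$ — being clopen in the closed subset $C$ of $X$ — and its complement each trace an open subset of $X$. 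Finally $\pi_Y(0) = y$ and $\pi_Y(1 / n_s) = f(x_s)$ defines an injection whose image is $\set{y} \cup \set{f(x_s)}[s \in \Bairetree]$; since the $f(x_s)$ are distinct and, lying in the $\varepsilon_{n_s}$-balls with $\varepsilon_k \to 0$ and $s \mapsto n_s$ injective, converge to $y$, $\pi_Y$ is a homeomorphism of $\image{\fzero}{\weakextendedBairespace}$ onto its image; and $f \composition \pi_X = \pi_Y \composition \fzero$ by construction, so $\pair{\pi_X}{\pi_Y}$ embeds $\fzero$ into $f$. The real difficulty is to meet, in one construction, three competing demands — faithfully encoding the ``tree of limits'' topology of $\weakextendedBairespace$, forcing the $f$-values to tend to $y$ along every branch (which is what puts the branch points inside $G$), and keeping the countably many values $f(x_s)$ pairwise distinct — and it is precisely the nowhere-density of the level sets $\preimage{f}{\set{z}} \cap C$ that allows the third demand to coexist with the first two.
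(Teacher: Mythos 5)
Your proof is correct, and at its core it runs on the same two engines as the paper's: mutual exclusivity comes from the fact that $\Bairespace$ is not \Fsigma in $\weakextendedBairespace$ (Proposition \ref{Fsigma:Bairespace}), and the main implication combines the \Hurewicz dichotomy with a Baire-category recursion exploiting that $f$-preimages of small balls about $y$ are \Fsigma and comeager wherever the fibre $f^{-1}(y)$ is comeager. What differs is the factorization. The paper first applies its embedding form of the \Hurewicz theorem (Theorem \ref{Fsigma:dichotomy}) to get a copy of $\weakextendedBairespace$ in $X$ whose $\Bairespace$-part is exactly $f^{-1}(y)$, and then runs the recursion inside $\weakextendedBairespace$ itself (Proposition \ref{Fsigmatoone:Bairecategory}, via the dense open sets of Lemma \ref{Fsigmatoone:Bairecategory:denseopen}), where the clopen tree structure is already available; you instead invoke the classical Cantor-set form of the dichotomy and perform one merged recursion inside the Cantor set $C$, rebuilding the structure of $\weakextendedBairespace$ by hand (nested clopen sets of small diameter, the annuli about each $x_s$). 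This costs extra zero-dimensional bookkeeping but needs only the textbook statement of \Hurewicz. The injectivity of $\pi_Y$ is also handled differently: the paper makes the distances $d_Y(y, f(u_{s_n}))$ strictly decrease along its enumeration, so the values are automatically distinct, whereas you observe that each level set $f^{-1}(z) \cap C$ with $z \neq y$ is a countable \Gdelta, hence nowhere dense in $C$, so infinitely many values are available on every $B_s' \cap E$ --- a nice additional use of the Baire-class-one hypothesis. One small point to tighten: when locating $B_s'$ inside the preimage of the $\varepsilon_{n_s}$-ball, density of $B_s \cap G$ in $B_s$ alone does not yield a closed piece with nonempty interior (think of the rationals); what you need, and do have, is that $B_s \cap G$ is comeager in $B_s$, since its complement lies in the countable set $E$ and $C$ is perfect --- exactly the Baire-category argument of Lemma \ref{Fsigmatoone:Bairecategory:denseopen}.
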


In \S\ref{sigmacontinuous}, we use the sequential $\aleph_0$-dimensional
analog of the $\Gzero$ dichotomy theorem (i.e., the straightforward
common generalization of \cite[Theorems 18 and 21]{Miller:Survey}) to
provide a one-element basis, under topological embeddability, for the family
of \Fsigma-to-one \Baire-class one functions which are not
$\sigma$-continuous with closed witnesses. To be precise, let 
$\strongextendedBairespace$ denote the set $\extendedBairespace$
equipped with the smallest topology making the sets
$\extendedextensions{s}$ and $\set{s}$ clopen for all $s \in \Bairetree$,
and define $\fone \from \weakextendedBairespace \to
\strongextendedBairespace$ by $\fone(s) = s$.

\begin{introtheorem} \label{second}
  Suppose that $X$ and $Y$ are \Polish spaces and $f \from X \to Y$ is
  an \Fsigma-to-one \Baire-class one function. Then exactly one of the
  following holds:
  \begin{enumerate}
    \item The function $f$ is $\sigma$-continuous with closed witnesses.
    \item There is a topological embedding of $\fone$ into $f$.
  \end{enumerate}
\end{introtheorem}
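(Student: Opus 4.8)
The plan is to dispatch mutual exclusivity by a Baire category argument, reformulate $\sigma$-continuity with closed witnesses as a countable-chromatic-number condition for a sequence of $\aleph_0$-dimensional dihypergraphs, apply the sequential $\aleph_0$-dimensional analog of the $\Gzero$ dichotomy, and extract the embedding from the resulting homomorphism. For mutual exclusivity: if $f$ is $\sigma$-continuous with closed witnesses, say $X = \bigcup_{n \in \N} C_n$ with each $C_n$ closed and each $\restriction{f}{C_n}$ continuous, and $\pair{\pi_X}{\pi_Y}$ is a topological embedding of $\fone$ into $f$, then $\weakextendedBairespace = \bigcup_{n \in \N} \preimage{\pi_X}{C_n}$ is a countable union of closed sets, and since $\pi_X$ and each $\restriction{f}{C_n}$ are continuous, $\pi_Y$ is a topological embedding, and $\pi_Y \composition \fone = f \composition \pi_X$, the map $\fone$ is continuous on each $\preimage{\pi_X}{C_n}$ when regarded as a map into $\strongextendedBairespace$. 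By the Baire category theorem some $\preimage{\pi_X}{C_n}$ contains a basic clopen set $\extendedextensions{s}$; but $\set{s}$ is open in $\strongextendedBairespace$ while $s$ is a $\weakextendedBairespace$-limit of its proper extensions, so $\fone$ is discontinuous at $s$ relative to $\extendedextensions{s}$, a contradiction. It thus remains to show that if $f$ is not $\sigma$-continuous with closed witnesses then $\fone$ embeds into $f$. I would first replace $Y$ by a metrizable compactification into which it embeds --- this preserves every hypothesis and the conclusion --- so as to assume $Y$ compact, and fix a countable basis $\sequence{W_j}[j \in \N]$ of $Y$.

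Next, using the characterization of \Baire-class one functions from \S\ref{Baireclassone}, refine the topology $\tau$ of $X$ to a \Polish topology $\tau'$ making $f$ continuous, by declaring clopen the members of a fixed countable family of $\tau$-closed sets whose unions are the sets $\preimage{f}{W_j}$, and fix a countable basis $\sequence{U_j}[j \in \N]$ of $\tau'$. A $\tau$-closed set $C$ has $\restriction{\tau}{C} = \restriction{\tau'}{C}$ exactly when $\restriction{f}{C}$ is continuous, so $f$ is $\sigma$-continuous with closed witnesses if and only if $X$ is covered by countably-many $\tau$-closed sets on each of which every $U_j$ is relatively $\tau$-open. For $j \in \N$ I would let $\mathcal{H}_j$ be the $\aleph_0$-dimensional dihypergraph on $X$ whose hyperedges are the sequences $\sequence{x_i}[i \in \N]$ with $x_0 \in U_j$, with $x_i \notin U_j$ for every $i \geq 1$, and with $\sequence{x_i}[i \geq 1]$ $\tau$-converging to $x_0$; a set is then independent in all of the $\mathcal{H}_j$ precisely when $f$ is continuous on it. After the passage to $\tau'$ --- which makes the splitting condition ``$x_0 \in U_j$ and $x_i \notin U_j$ for $i \geq 1$'' box-open, the $\tau$-convergence clause being carried by the structure of the canonical homomorphisms rather than by the hyperedges themselves --- the sequence $\sequence{\mathcal{H}_j}[j \in \N]$ is within the scope of the sequential $\aleph_0$-dimensional analog of the $\Gzero$ dichotomy, the straightforward common generalization of \cite[Theorems 18 and 21]{Miller:Survey}.

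Applying that dichotomy: in its chromatic alternative, $X$ is covered by countably-many \Borel sets each independent in every $\mathcal{H}_j$, so $f$ is a countable union of continuous functions with \Borel domains; here I would use that $f$ is \Baire class one and \Fsigma-to-one to upgrade this to a decomposition with \emph{closed} domains --- a failure of the upgrade would exhibit a fibre of $f$ that induces a non-\Fsigma set on a closed subspace, hence, by (the proof of) Theorem \ref{first}, a copy of $\fzero$ in $f$, contradicting \Fsigma-to-one-ness --- so that $f$ would be $\sigma$-continuous with closed witnesses, contrary to assumption. Hence the homomorphism alternative of the dichotomy holds: there is a continuous homomorphism $\phi$ from the canonical sequence $\sequence{\GzeroN[j]}[j \in \N]$ into $\sequence{\mathcal{H}_j}[j \in \N]$. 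By the design of the canonical objects in this common generalization, the domain of $\phi$ carries exactly the $\weakextendedBairespace$/$\strongextendedBairespace$ pair of topologies: the finite sequences index the centres $x_0$ of the $\mathcal{H}_j$-hyperedges, the infinite sequences index their $\tau$-limits, $\phi$ is continuous from $\weakextendedBairespace$ to $(X, \tau)$, and $\phi$ is continuous from the underlying set of $\strongextendedBairespace$ to $(X, \tau')$.

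It then remains to set $\pi_X = \phi$ and $\pi_Y = f \composition \pi_X$ and to check these work. That $\pi_X$ is a topological embedding of $\weakextendedBairespace$ into $X$ and that $\pi_Y$ is continuous on $\strongextendedBairespace$ (since $f$ is $\tau'$-continuous and $\pi_X$ is continuous from $\strongextendedBairespace$ to $(X,\tau')$) one checks in the usual way; that $\pi_Y$ is actually a topological embedding of $\strongextendedBairespace$ --- injective, with the branch-images carrying exactly the relative topology of $\strongextendedBairespace$ on the infinite sequences --- uses the homomorphism condition, which separates each centre from the points approaching it via the relevant $U_j$, together with the \Fsigma-to-one hypothesis, which prevents distinct branches from being collapsed to a common value (a collapse would again reproduce $\fzero$). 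The step I expect to be the main obstacle is the design and analysis of the $\mathcal{H}_j$: they must at once detect discontinuity of $\restriction{f}{C}$ for arbitrary $\tau$-closed $C$, become box-open after the passage to $\tau'$ so that the dichotomy produces a homomorphism with $\weakextendedBairespace$/$\strongextendedBairespace$ domain, and permit the improvement of a \Borel independent covering to a closed one. This last point --- the passage from $\sigma$-continuity with \Borel witnesses to $\sigma$-continuity with closed witnesses under the \Fsigma-to-one hypothesis, in the spirit of the \Jayne--\Rogers passage from $\Deltaclass[0][2]$-measurability to piecewise continuity with closed pieces --- is where I expect the real difficulty, alongside the final verification that $\pi_Y$ is a genuine embedding of $\strongextendedBairespace$ rather than merely a continuous injection.
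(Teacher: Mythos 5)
Your mutual-exclusivity argument is fine (the paper gets it even more cheaply from the easy direction of \Jayne--\Rogers together with Proposition \ref{Fsigma:Bairespace}), but the main direction has genuine gaps at exactly the two points you flag as ``the real difficulty,'' and these are not details one can defer. First, the chromatic alternative. Your dihypergraphs $\mathcal{H}_j$ only forbid sequences whose \emph{limit lies in the set itself}, so $\mathcal{H}_j$-independence of $S$ amounts to relative continuity on $S$ and is \emph{not} inherited by $\closure{S}$; consequently the chromatic alternative hands you \Borel witnesses, and the upgrade to closed witnesses is precisely the \Jayne--\Rogers-type content you have not supplied. Your proposed fix --- that a failure of the upgrade would ``exhibit a fibre of $f$ that induces a non-\Fsigma set on a closed subspace, hence a copy of $\fzero$'' --- is not an argument: nothing in a failed Borel-to-closed decomposition produces a non-\Fsigma fibre. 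The paper's device here is different and is the key missing idea: it works with the single dihypergraph $G$ of \emph{all} convergent sequences $\sequence{x_n}$ with $f(\lim_n x_n)\neq\lim_n f(x_n)$, whose independence is a property about limits possibly outside the set; Proposition \ref{closure} shows closures of $G$-independent sets are $G$-independent, so a countable cover by independent sets immediately yields closed witnesses, with no \Jayne--\Rogers-style upgrade needed. (Also, your claim that a $\tau$-closed $C$ satisfies $\restriction{\tau}{C}=\restriction{\tau'}{C}$ \emph{exactly} when $\restriction{f}{C}$ is continuous is false in one direction: $\tau'$ is generated by specific closed pieces of the sets $\preimage{f}{W_j}$, and continuity of $f$ on $C$ does not make those pieces relatively open.)

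Second, the homomorphism alternative. The sequential $\aleph_0$-dimensional dichotomy does not produce a map ``whose domain carries exactly the $\weakextendedBairespace$/$\strongextendedBairespace$ pair of topologies''; it produces a continuous homomorphism $\phi$ defined on a dense \Gdelta subset $C$ of $\Bairespace$ only, sending $\restriction{\GzeroN[n]}{C}$-edges to edges of the given dihypergraphs. Setting $\pi_X=\phi$ is therefore not even well typed on $\Bairetree$, and ``one checks in the usual way'' hides the entire content of Proposition \ref{sigmacontinuous:Bairecategory}: a fusion construction (conditions (1)--(9) there) which manufactures the values at finite sequences as carefully chosen limits, and whose verification that $\pi_Y$ is an embedding of $\strongextendedBairespace$ rests on two preparations absent from your outline. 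One is quantitative control of the edges --- the paper replaces $G$ by the graded dihypergraphs $G_{1/m}$ (edges with $f$-oscillation below $1/m$), using Proposition \ref{Baireclassone:covers} to see that $X$ is still not countably covered by eventually independent sets, so that the target neighbourhoods $V_s$ can be nested with shrinking diameters. The other is where \Fsigma-to-one actually enters: via Theorem \ref{Fsigma:functiondichotomy} (\SaintRaymond) one finds an analytic set $A$ meeting every fibre of $f$ in a closed set and not countably covered by eventually independent sets, and the homomorphism is taken into $A$; closedness of $A\intersection\preimage{f}{y}$ is what rules out constant subsequences of the values $f(x_{i,s})$ in the fusion, hence what makes $\pi_Y$ injective. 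Your remark that a collapse of branches ``would again reproduce $\fzero$'' is, like the earlier one, an appeal to a mechanism you have not exhibited. So the overall architecture (dichotomy for discontinuity dihypergraphs plus an embedding-extraction step) matches the paper, but both load-bearing components --- the closure-hereditary choice of dihypergraph and the fusion construction with the $G_{1/m}$ and strongly $\sigma$-closed-to-one inputs --- are missing.
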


Theorem \ref{second} trivially yields the following.

\begin{introtheorem}[\Jayne-\Rogers]
  Suppose that $X$ and $Y$ are \Polish spaces, and $f \from X \to Y$ is a
  function with the property that $\preimage{f}{C}$ is \Fsigma, for all closed
  subsets $C$ of $Y$. Then $f$ is $\sigma$-continuous with closed witnesses.
\end{introtheorem}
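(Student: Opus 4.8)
The plan is to deduce this from Theorem \ref{second}. First I would note that the hypothesis forces $f$ to be an \Fsigma-to-one \Baire-class one function: since $Y$ is metrizable, every open subset $U$ of $Y$ is a countable union of closed sets, so $\preimage{f}{U}$ is a countable union of preimages of closed subsets of $Y$, hence \Fsigma by assumption, and therefore $f$ is \Baire class one; and since singletons in $Y$ are closed, $\preimage{f}{\set{y}}$ is \Fsigma for all $y \in Y$, so $f$ is \Fsigma-to-one. By Theorem \ref{second}, it then suffices to show that there is no topological embedding of $\fone$ into $f$.

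The crux is the purely topological fact that $\Bairespace$ is not \Fsigma in $\weakextendedBairespace$. To see this, suppose towards a contradiction that $\Bairespace = \union[n \in \N][F_n]$, where each $F_n$ is closed in $\weakextendedBairespace$; then each $F_n$ is contained in $\Bairespace$, hence disjoint from $\Bairetree$. Unwinding the basic neighbourhoods of a point $s \in \Bairetree$ in $\weakextendedBairespace$, one checks that if a set $F \subseteq \extendedBairespace$ meets $\extendedextensions{t}$ for every $t \in \Bairetree$ with $s \extendedby t$, then $s \in \closure{F}$. Since each $F_n$ is closed and disjoint from $\Bairetree$, it follows that for all $s \in \Bairetree$ and $n \in \N$ there is $t \in \Bairetree$ with $s \extendedby t$ and $\extendedextensions{t} \intersection F_n = \emptyset$. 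I would then recursively build $s_n \in \Bairetree$ with $s_n \strictlyextendedby s_{n + 1}$ and $\extendedextensions{s_{n + 1}} \intersection F_n = \emptyset$ for all $n$, so that $x = \union[n \in \N][s_n]$ is an element of $\Bairespace$ that lies in no $F_n$, contradicting $\Bairespace = \union[n \in \N][F_n]$.

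Finally, I would rule out a topological embedding $\pair{\pi_X}{\pi_Y}$ of $\fone$ into $f$ by transporting this set. The set $\Bairespace$ is closed in $\strongextendedBairespace$, since its complement $\Bairetree$ is a union of the clopen singletons $\set{s}$; as $\pi_Y$ is a topological embedding of $\strongextendedBairespace = \image{\fone}{\weakextendedBairespace}$ into $\image{f}{X}$, the set $C = \closure{\image{\pi_Y}{\Bairespace}}$, with the closure taken in $Y$, is closed and satisfies $C \intersection \image{\pi_Y}{\strongextendedBairespace} = \image{\pi_Y}{\Bairespace}$. Using $f \composition \pi_X = \pi_Y \composition \fone$ and the injectivity of $\pi_Y$, this yields $\preimage{f}{C} \intersection \image{\pi_X}{\weakextendedBairespace} = \image{\pi_X}{\Bairespace}$. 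But $C$ is closed, so $\preimage{f}{C}$ is \Fsigma by hypothesis, hence so is its trace on the subspace $\image{\pi_X}{\weakextendedBairespace}$, and hence --- since $\pi_X$ is a homeomorphism onto its image --- so is $\Bairespace$ in $\weakextendedBairespace$, contradicting the second paragraph. The only step requiring real work is the topological observation of the second paragraph; the rest is the bookkeeping that makes this an essentially immediate consequence of Theorem \ref{second}.
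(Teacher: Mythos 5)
Your proposal is correct and is essentially the derivation the paper intends when it says Theorem \ref{second} ``trivially yields'' this result: the hypothesis gives that $f$ is \Fsigma-to-one and \Baire class one, and an embedding of $\fone$ is ruled out by pulling a suitable closed set back to witness that $\Bairespace$ would be \Fsigma in $\weakextendedBairespace$, contradicting Proposition \ref{Fsigma:Bairespace}. The only cosmetic difference is that you re-prove part (a) of that proposition by a direct diagonal construction, where the paper cites the \Baire category theorem (two disjoint dense \Gdelta sets); both arguments are fine.
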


And Theorems \ref{first} and \ref{second} trivially yield the following.

\begin{introtheorem}[\Solecki]
  Suppose that $X$ and $Y$ are \Polish spaces and $f$ is a \Baire-class one
  function. Then exactly one of the following holds:
  \begin{enumerate}
    \item The function $f$ is $\sigma$-continuous with closed witnesses.
    \item There is a topological embedding of $\fzero$ or $\fone$ into $f$.
  \end{enumerate}
\end{introtheorem}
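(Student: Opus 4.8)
The plan is to obtain the statement as an immediate consequence of Theorems~\ref{first} and~\ref{second}, the only extra ingredient being the observation that every $\sigma$-continuous function with closed witnesses is \Fsigma-to-one: if $X = \bigcup_{n \in \N} C_n$ with each $C_n$ closed and each $\restriction{f}{C_n}$ continuous, then for every $y$ the fibre $\preimage{f}{\set{y}}$ is the union of the sets $\preimage{(\restriction{f}{C_n})}{\set{y}}$, and each of these is closed in $C_n$, hence in $X$, so $\preimage{f}{\set{y}}$ is \Fsigma.

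Granting this, I would first establish that one of the two alternatives holds. Apply Theorem~\ref{first} to $f$. If there is a topological embedding of $\fzero$ into $f$, then condition~(2) holds. Otherwise $f$ is \Fsigma-to-one, so, being also \Baire class one, it falls under the hypothesis of Theorem~\ref{second}: either $f$ is $\sigma$-continuous with closed witnesses, giving condition~(1), or there is a topological embedding of $\fone$ into $f$, again giving condition~(2).

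For the incompatibility of the two alternatives, suppose that $f$ is $\sigma$-continuous with closed witnesses. Then $f$ is \Fsigma-to-one by the observation above, so the uniqueness part of Theorem~\ref{first} rules out a topological embedding of $\fzero$ into $f$; and since $f$ is now an \Fsigma-to-one \Baire-class one function that is $\sigma$-continuous with closed witnesses, the uniqueness part of Theorem~\ref{second} rules out a topological embedding of $\fone$ into $f$. Hence condition~(2) fails, and we are done. There is no genuine obstacle in this derivation---all the substance lies in Theorems~\ref{first} and~\ref{second}---and the only point requiring attention is that the preliminary observation is exactly what makes Theorem~\ref{second} applicable in both directions of the argument.
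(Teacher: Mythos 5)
Your derivation is correct and is exactly the route the paper intends when it says that Theorems~\ref{first} and~\ref{second} ``trivially yield'' \Solecki's theorem: you fill in the only point needing any attention, namely that $\sigma$-continuity with closed witnesses implies being \Fsigma-to-one, which both makes Theorem~\ref{second} applicable and lets the exclusivity clauses of the two theorems rule out embeddings of $\fzero$ and $\fone$.
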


We close the paper with an appendix in which we prove several auxiliary
facts needed for the proof of the main results above. Along the way, we use
\Lecomte's $\aleph_0$-dimensional analog of the $\Gzero$ dichotomy
theorem (see \cite[Theorem 1.6]{Lecomte:Hypergraph} or \cite[Theorem 18]
{Miller:Hypergraph}) to give a new proof of a special case of \Hurewicz's
dichotomy theorem (see, for example, \cite[Theorem 21.18]{Kechris}),
yielding the existence of a one-element basis, under topological
embeddability, for the family of \Borel sets which are not \Fsigma. To be
precise, we show that if $X$ is a \Polish space and $B \subseteq
X$ is \Borel, then either $B$ is \Fsigma, or there is a topological
embedding $\pi \from \weakextendedBairespace \to X$ of $\Bairespace$
into $B$. We note that the same argument, using the parametrized
$\aleph_0$-dimensional analog of the $\Gzero$ dichotomy theorem
(i.e., the straightforward common generalization of \cite[Theorems 18
and 31]{Miller:Survey}) in lieu of its non-parametrized counterpart, yields
a slight weakening of \SaintRaymond's parametrized analog of
\Hurewicz's result (see, for example, \cite[Theorem 35.45]{Kechris}). As
a corollary, we show that \Fsigma-to-one \Borel functions between \Polish
spaces are strongly $\sigma$-closed-to-one.

\section{Baire-class one functions} \label{Baireclassone}

Throughout the rest of the paper, we will rely on the following
characterization of \Baire-class one functions.

\begin{proposition} \label{Baireclassone:covers}
  Suppose that $X$ is a topological space, $Y$ is a second
  countable metric space, and $f \from X \to Y$ is a function. Then
  the following are equivalent:
  \begin{enumerate}
    \item The function $f$ is \Baire class one.
    \item For all $\epsilon > 0$, there is a cover of $X$ by
      countably-many closed subsets whose $f$-images have
      $d_Y$-diameter strictly less than $\epsilon$.
  \end{enumerate}
\end{proposition}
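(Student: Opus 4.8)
The plan is to prove both implications directly from the definitions, using the fact that $Y$ is second countable and metric.

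For the implication $(1) \Rightarrow (2)$, fix $\epsilon > 0$. Cover $\image{f}{X}$ by countably-many open balls $\sequence{U_n}[n \in \N]$ of $d_Y$-radius $\epsilon / 3$, so each $U_n$ has $d_Y$-diameter at most $2\epsilon/3 < \epsilon$. Since $f$ is \Baire class one, each set $\preimage{f}{U_n}$ is \Fsigma, say $\preimage{f}{U_n} = \union[m \in \N]{C_{n, m}}$ with each $C_{n, m}$ closed. The sets $C_{n, m}$ form a countable closed cover of $X$, since the $U_n$ cover $\image{f}{X}$, and $\image{f}{C_{n, m}} \subseteq U_n$ has $d_Y$-diameter strictly less than $\epsilon$, as desired.

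For the implication $(2) \Rightarrow (1)$, fix an open set $U \subseteq Y$; I must show $\preimage{f}{U}$ is \Fsigma. Write $U = \union[k \in \N]{B(y_k, r_k)}$ as a countable union of open balls, so it suffices to show $\preimage{f}{B(y, r)}$ is \Fsigma\ for each open ball, and since $B(y, r) = \union[\ell \in \N]{\closure{B(y, r_\ell)}}$ for any sequence $r_\ell \nearrow r$ with $r_\ell < r$, it suffices to show that $\preimage{f}{\closure{B(y, s)}}$ is \Fsigma\ whenever $\closure{B(y, s)} \subseteq B(y, r)$; in fact it is enough to show $\preimage{f}{C}$ is \Fsigma\ for every closed $C$ whose $d_Y$-diameter bound is immaterial — but here is where I use the hypothesis. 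Given such a $C$ contained in the open ball $B(y, r)$, set $\epsilon = r - s > 0$ (where $C \subseteq \closure{B(y,s)}$), apply (2) to get a countable closed cover $\sequence{F_n}[n \in \N]$ of $X$ with each $\image{f}{F_n}$ of $d_Y$-diameter strictly less than $\epsilon$, and note that $\preimage{f}{C} = \union[n \in \N]{(F_n \intersection \preimage{f}{C})}$. For each $n$, either $F_n \intersection \preimage{f}{C} = \emptyset$, or picking any $x \in F_n \intersection \preimage{f}{C}$ we have $\image{f}{F_n} \subseteq B(\image{f}{x}, \epsilon) \subseteq B(y, r)$, so in the latter case $F_n \intersection \preimage{f}{C}$ is the intersection of the closed set $F_n$ with $\preimage{f}{C}$, and the point is that $\preimage{f}{C} \intersection F_n = \preimage{(\restriction{f}{F_n})}{C \intersection \closure{\image{f}{F_n}}}$; since $\closure{\image{f}{F_n}}$ lies in the open ball $B(y, r)$, one checks that $C \intersection \closure{\image{f}{F_n}}$ is relatively closed in $\closure{\image{f}{F_n}}$, hence $F_n \intersection \preimage{f}{C}$ is closed in $F_n$, hence closed in $X$. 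Thus $\preimage{f}{C}$ is \Fsigma.

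The main obstacle is getting the second implication to actually produce closedness rather than merely $F_\sigma$-ness at each stage: one has to exploit that the pieces $F_n$ are genuinely \emph{closed} in $X$ (not just $F_\sigma$) and that their images have small diameter, so that intersecting with the preimage of a closed set stays closed. Concretely, the cleanest route is to reduce $\preimage{f}{U}$ for open $U$ to a countable union of sets of the form $\preimage{f}{C}$ for closed $C$, then observe that for a closed set $F \subseteq X$ with $\diameter{Y}{\image{f}{F}} < \epsilon$, the set $F \intersection \preimage{f}{C}$ equals $F \setminus \preimage{f}{Y \setminus C}$ and $Y \setminus C$ is open, so one must show $F \intersection \preimage{f}{Y \setminus C}$ is open in $F$ — which follows because a small-diameter image means $\restriction{f}{F}$ lands in a small ball, and we may choose the reduction so that this small ball is either disjoint from $C$ or contained in the complement's "safe zone." Care must be taken to organize the ball decompositions so that the diameter bound $\epsilon$ can be chosen after seeing the target closed set; I expect this bookkeeping, rather than any deep idea, to be the fiddly part.
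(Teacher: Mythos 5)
Your argument for $(1)\Rightarrow(2)$ is correct and essentially the paper's. The problem is in $(2)\Rightarrow(1)$, at the step asserting that $F_n\cap f^{-1}(C)$ is closed: you rewrite this set as the preimage of $C\cap\overline{f(F_n)}$ under $\restriction{f}{F_n}$ and conclude closedness, but that inference requires $\restriction{f}{F_n}$ to be continuous, whereas hypothesis (2) only gives that $f(F_n)$ has small diameter. The claim is in fact false, and so is the sufficient condition your reduction rests on (that (2) forces $f^{-1}(\overline{B(y,s)})$ to be \Fsigma whenever $\overline{B(y,s)}\subseteq B(y,r)$): by the easy direction, (2) is just another name for Baire class one, and Baire-class-one functions can have closed balls with non-\Fsigma preimages. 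For instance, Thomae's function $f\colon\R\to\R$ (with $f(p/q)=1/q$ in lowest terms and $f=0$ on the irrationals) is Baire class one, yet $f^{-1}([-1/2,0])$ is the set of irrationals, which is not \Fsigma, even though $[-1/2,0]=\overline{B(-1/4,1/4)}\subseteq B(-1/4,1/2)$; consequently, for every closed cover at scale $\epsilon=1/4$ some piece $F_n$ has $F_n\cap f^{-1}([-1/2,0])$ non-closed. The condition that preimages of closed sets be \Fsigma is exactly the Jayne--Rogers condition, which is strictly stronger than Baire class one --- that gap is the very subject of this paper, so no correct argument can close it. (A smaller slip: you only get $\overline{f(F_n)}\subseteq\{z\in Y: d_Y(z,y)\le r\}$, not $\overline{f(F_n)}\subseteq B(y,r)$.)

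The repair is not bookkeeping but a change of target: do not try to show that preimages of closed sets are \Fsigma; interpolate instead. Given an open $V\subseteq Y$ and $\epsilon>0$, let $V_\epsilon=\set{y\in Y}[\ball{d_Y}{y}{\epsilon}\subseteq V]$, take the cover $\sequence{C_n}[n\in\N]$ furnished by (2) at scale $\epsilon$, and let $F$ be the union of those \emph{whole} pieces $C_n$ whose $f$-image meets $V_\epsilon$; the diameter bound gives $f^{-1}(V_\epsilon)\subseteq F\subseteq f^{-1}(V)$ with $F$ \Fsigma, and since $f^{-1}(V)=\bigcup_{k\ge 1}f^{-1}(V_{1/k})$, the set $f^{-1}(V)$ is covered by countably many such \Fsigma subsets of itself, hence is \Fsigma. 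Your closing remark about each piece being either disjoint from $C$ or inside a ``safe zone'' gestures at exactly this in-or-out dichotomy on whole pieces, but the proof as written never uses it and instead relies on the false closedness claim.
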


\begin{proof}
  To see $(1) \implies (2)$, it is sufficient to show that for all real
  numbers $\epsilon > 0$ and open sets $V \subseteq Y$ of
  $d_Y$-diameter strictly less than $\epsilon$, the set $\preimage
  {f}{V}$ is a union of countably-many closed subsets of $X$. But
  this follows from the fact that $\preimage{f}{V}$ is \Fsigma.
  
  To see $(2) \implies (1)$, it is sufficient to show that for all real
  numbers $\epsilon > 0$ and open sets $V \subseteq Y$, there
  is an \Fsigma set $F \subseteq X$ such that $\preimage{f}
  {V_\epsilon} \subseteq F \subseteq \preimage{f}{V}$, where
  $V_\epsilon = \set{y \in Y}[\ball{d_Y}{y}{\epsilon} \subseteq V]$.
  Towards this end, fix a cover $\sequence{C_n}[n \in \N]$ of $X$
  by closed sets whose $f$-images have $d_Y$-diameter strictly
  less than $\epsilon$, define $N = \set{n \in \N}[\image{f}{C_n}
  \intersection V_\epsilon \neq \emptyset]$, and observe that the
  set $F = \union[n \in N][C_n]$ is as desired. 
\end{proof}

As a corollary, we obtain the following.

\begin{theorem} \label{Baireclassone:dichotomy}
  Suppose that $X$ and $Y$ are \Polish spaces, $d_Y$ is a
  compatible metric on $Y$, and $f \from X \to Y$ is \Borel.
  Suppose further that for all countable sets $C \subseteq X$ and
  real numbers $\epsilon > 0$, there is a \Baire-class one
  function $g \from X \to Y$ with $\sup_{x \in C} d_Y(f(x), g(x))
  \le \epsilon$. Then $f$ is \Baire class one.
\end{theorem}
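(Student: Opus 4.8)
The plan is to reduce the statement, via Proposition \ref{Baireclassone:covers}, to producing for each $\epsilon > 0$ a cover of $X$ by countably-many closed sets with $f$-images of $d_Y$-diameter strictly less than $\epsilon$, and to obtain such a cover from the $\Deltaclass[0][2]$-measurable analog of the $\Gzero$ dichotomy. Fix $\epsilon > 0$ and let $G$ be the graph on $X$ given by $x \mathrel{G} y \iff d_Y(f(x), f(y)) \ge \epsilon / 2$. Since $f$ is \Borel, so is $G$, hence $G$ is analytic. If $A \subseteq X$ is $G$-independent (i.e., contains no $G$-edge), then $d_Y(f(x), f(y)) < \epsilon / 2$ for all $x, y \in A$, so $\diameter{d_Y}{\image{f}{A}} \le \epsilon / 2 < \epsilon$. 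Moreover, $\Deltaclass[0][2]$ sets are \Fsigma, so given a $\Deltaclass[0][2]$-measurable map $c \from X \to \N$ with $c(x) \ne c(y)$ whenever $x \mathrel{G} y$, writing each $G$-independent set $\preimage{c}{n}$ as a countable union of closed sets yields a cover of $X$ by countably-many closed $G$-independent sets, which is the cover we want. Thus it suffices to show that $G$ admits such a $\Deltaclass[0][2]$-measurable coloring.

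Apply the \Lecomte-\Zeleny dichotomy (see \cite[Corollary 4.5]{LecomteZeleny}) to $G$. In the first case there is a $\Deltaclass[0][2]$-measurable coloring of $G$, and we are done for this $\epsilon$. So it is enough to rule out the second case, that there is a continuous homomorphism $\varphi \from \Cantorspace \to X$ from $\Gzero$ to $G$. Suppose there is such a $\varphi$, let $D \subseteq \Cantorspace$ be the countable set of eventually-zero sequences, and set $C = \image{\varphi}{D}$. By the hypothesis, fix a \Baire-class one function $g \from X \to Y$ with $\sup_{x \in C} d_Y(f(x), g(x)) \le \epsilon / 8$, and by Proposition \ref{Baireclassone:covers} applied to $g$, fix a cover $\sequence{K_m}[m \in \N]$ of $X$ by closed sets whose $g$-images have $d_Y$-diameter strictly less than $\epsilon / 8$.

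Since $\Cantorspace$ is a \Baire space and $\Cantorspace = \union[m \in \N][\preimage{\varphi}{K_m}]$ is a countable union of closed sets, there is $m \in \N$ for which $\preimage{\varphi}{K_m}$ has non-empty interior, hence contains a basic clopen set $\set{a \in \Cantorspace}[s \extendedby a]$ for some $s \in \Cantortree$. Since the sequence $\sequence{s_n}[n \in \N]$ used to define $\Gzero$ is dense, there is $n \in \N$ with $s \extendedby s_n$, so $\set{a \in \Cantorspace}[s_n \extendedby a] \subseteq \preimage{\varphi}{K_m}$. Let $a, b \in \Cantorspace$ be the eventually-zero sequences obtained by appending $0$ to $s_n$, resp.\ appending $1$ to $s_n$, and then appending zeros; then $a$ and $b$ extend $s_n$ and form a $\Gzero$-edge. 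It follows that $\varphi(a)$ and $\varphi(b)$ both lie in $K_m$, so $d_Y(g(\varphi(a)), g(\varphi(b))) < \epsilon / 8$, and both lie in $C$, so $d_Y(f(\varphi(a)), g(\varphi(a))) \le \epsilon / 8$ and $d_Y(f(\varphi(b)), g(\varphi(b))) \le \epsilon / 8$; but $\pair{\varphi(a)}{\varphi(b)} \in G$, so $d_Y(f(\varphi(a)), f(\varphi(b))) \ge \epsilon / 2$. Three applications of the triangle inequality now give $\epsilon / 2 \le 3 \epsilon / 8 < \epsilon / 2$, a contradiction. So the second case of the dichotomy cannot occur, and $f$ is \Baire class one.

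The main obstacle is that the countable set $C$ on which $g$ must approximate $f$ has to be chosen before $g$ is known, and in particular before the category argument singles out which block $\set{a \in \Cantorspace}[s_n \extendedby a]$ of $\Gzero$ one lands in. This is handled by taking $C$ to be the $\varphi$-image of a countable set that is simultaneously dense enough and closed enough under the $\Gzero$ edge-relation to contain both endpoints of a $\Gzero$-edge inside every such block — the eventually-zero sequences work, since for each $n$ the pair obtained by extending $s_n$ by $0$, resp.\ by $1$, and then by zeros is a $\Gzero$-edge. The remaining ingredients — the equivalence between $\Deltaclass[0][2]$-measurable colorings and countable closed $G$-independent covers, and the juggling of the threshold $\epsilon / 2$ against the approximation error $\epsilon / 8$ — are routine.
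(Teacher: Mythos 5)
Your reduction via Proposition \ref{Baireclassone:covers}, the use of the \Lecomte-\Zeleny $\Deltaclass[0][2]$ dichotomy, the choice of a countable set $C$ as a $\varphi$-image fixed before $g$ is known, and the \Baire-category-plus-triangle-inequality endgame are exactly the paper's proof (the paper merely runs one global contradiction, fixing a single bad $\delta$ and taking $\epsilon = \delta/3$, instead of your per-$\epsilon$ case analysis). The genuine problem is your statement of the second alternative of the dichotomy: \cite[Corollary 4.5]{LecomteZeleny} does not produce a continuous homomorphism from the full graph $\Gzero$, whose edges range over all tails $z \in \Cantorspace$, and it cannot. The paper's graph $\Gzerolittle$, which has one fixed tail $z_n$ per level and hence only countably many edges, has uncountable $\Deltaclass[0][2]$-measurable chromatic number (by the very \Baire-category argument you use) but countable \Borel chromatic number (color each of the countably many vertices meeting an edge by itself, and everything else by one further color), so it admits no continuous homomorphism from $\Gzero$; if the dichotomy read as you state it, applying it to $\Gzerolittle$ itself would be contradictory. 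This is not a pedantic point, because the one step of your argument that uses the fine structure of the graph is precisely the claim that your eventually-zero points $a$ and $b$ form an edge of the graph from which $\varphi$ is a homomorphism; with the digraph the dichotomy actually supplies, whose tails need not be eventually zero, your countable set $D$ of eventually-zero sequences need not contain the endpoints of any edge, and the proof breaks there.

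The repair is routine and turns your argument into the paper's: work with $\Gzerolittle$ and let $D$ be the countable set of endpoints of its edges, i.e.\ $D = \set{s_n^{\Deltaclass[0][2]} \concatenation \sequence{i} \concatenation z_n}[i < 2 \mathand n \in \N]$ (equivalently, if the parameters $z_n$ may be chosen freely, as the paper's usage indicates, choose them eventually zero and keep your $D$). Then $C = \image{\varphi}{D}$ is still countable and chosen before $g$, and whenever a basic clopen set $\set{a \in \Cantorspace}[s \extendedby a]$ is contained in $\preimage{\varphi}{K_m}$, picking $n$ with $s \extendedby s_n^{\Deltaclass[0][2]}$ puts both endpoints of the $n$-th edge inside $\preimage{\varphi}{K_m}$ and inside $D$, so your triangle-inequality computation with $\epsilon/8$ goes through verbatim. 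The remaining ingredients of your write-up---the passage from a $\Deltaclass[0][2]$-measurable coloring to a countable closed cover by independent sets, the analyticity of the distance graph, and the category argument---are correct and match the paper.
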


\begin{proof}
  Suppose, towards a contradiction, that $f$ is not \Baire class
  one, fix a compatible metric $d_Y$ on $Y$, and appeal to
  Proposition \ref{Baireclassone:covers} to find $\delta > 0$ for
  which there is no cover of $X$ by countably-many closed subsets
  whose $f$-images have $d_Y$-diameter at most $\delta$.
  
  A \definedterm{digraph} on a set $X$ is an irreflexive subset of
  $X \times X$. A \definedterm{homomorphism} from a digraph $G$
  on $X$ to a digraph $H$ on $Y$ is a function $\phi \from X \to Y$
  sending $G$-related points to $H$-related points.

  Let $G_{\delta, f}$ denote the digraph on $X$ consisting of all
  $\pair{w}{x} \in X \times X$ for which $d_Y(f(w), f(x)) > \delta$.
  We say that a set $W \subseteq X$ is \definedterm
  {$G_{\delta, f}$-independent} if $\restriction{G_{\delta, f}}{W} =
  \emptyset$. Our choice of $\delta$ ensures that $X$ is not the
  union of countably-many closed $G_{\delta, f}$-independent sets. 
  
  Fix $s_n^{\Deltaclass[0][2]} \in \Cantortree[n]$ such that $\forall s
  \in \Cantortree \exists n \in \N \ s \extendedby s_n^{\Deltaclass[0]
  [2]}$, as well as $z_n \in \Cantorspace$ for all $n \in \N$. Now
  define a digraph on $\Cantorspace$ by setting
  \begin{equation*}
    \Gzerolittle = \set{\pair{s_n^{\Deltaclass[0][2]} \concatenation
      \sequence{0} \concatenation z_n}{s_n^{\Deltaclass[0][2]}
        \concatenation \sequence{1} \concatenation z_n}}[n \in \N].
  \end{equation*}
  The \Lecomte-\Zeleny dichotomy theorem characterizing analytic
  graphs of uncountable $\Deltaclass[0][2]$-measurable chromatic
  number (see \cite[Corollary 4.5]{LecomteZeleny}) yields a continuous
  homomorphism $\phi \from \Cantorspace \to X$ from this digraph
  to $G_{\delta, f}$. Set
  \begin{equation*}
    C = \image{\phi}{\set{s_n^{\Deltaclass[0][2]} \concatenation
      \sequence{i} \concatenation z_n}[i < 2 \mathand n \in \N]}
  \end{equation*}
  and $\epsilon = \delta / 3$.
  
  It only remains to check that no function $g \from X \to Y$ with the
  property that $\sup_{x \in C} d_Y (f(x), g(x)) \le \epsilon$ is \Baire class
  one. As $\phi$ is necessarily a homomorphism from the above
  digraph to the digraph $G_{\epsilon, g}$ associated with such a
  function, there can be no cover of $X$ by countably-many closed
  subsets whose $g$-images have $d_Y$-diameter at most
  $\epsilon$, so one more appeal to Proposition \ref
  {Baireclassone:covers} ensures that $g$ is not \Baire class one.
\end{proof}

\section{\Fsigma-to-one functions} \label{Fsigmatoone}

The proof of Theorem \ref{first} is based on a technical but useful
sufficient condition for the topological embeddability of $\fzero$.

\begin{proposition} \label{Fsigmatoone:Bairecategory}
  Suppose that $Y$ is a \Polish space and $f \from
  \weakextendedBairespace \to Y$ is a \Baire-class one function for
  which there exists $y \in Y$ such that $\Bairespace = \preimage{f}
  {y}$. Then there is a topological embedding of $\fzero$ into $f$.
\end{proposition}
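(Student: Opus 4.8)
The plan is to produce the two components of the embedding by a tree fusion, the one substantial ingredient being a regularity property of $f$ obtained by combining the hypothesis that $f$ is \Baire class one with the fact that $\Bairespace$ is not \Fsigma in $\weakextendedBairespace$.

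Concretely, I would first establish: \emph{for all $s \in \Bairetree$ and $\epsilon > 0$ there is $t \in \Bairetree$ with $s \extendedby t$ such that $\restriction{f}{\extendedextensions{t}}$ takes infinitely-many values, each within $d_Y$-distance $\epsilon$ of $y$.} Fix a compatible metric $d_Y$ on $Y$, use Proposition \ref{Baireclassone:covers} to cover $\weakextendedBairespace$ by countably-many closed sets whose $f$-images have $d_Y$-diameter less than $\epsilon$, and restrict these sets to the clopen set $\extendedextensions{s}$. Since $\extendedextensions{s} \intersection \Bairespace$ is a non-empty \Polish subspace, the \Baire category theorem yields one of them, $C$, with non-empty interior in $\extendedextensions{s} \intersection \Bairespace$; as $C$ is closed and $\closure{\extendedextensions{t} \intersection \Bairespace} = \extendedextensions{t}$ for every $t$, there is $t$ with $s \extendedby t$ and $\extendedextensions{t} \subseteq C$, so $\diameter{d_Y}{\image{f}{\extendedextensions{t}}} < \epsilon$, and since $\extendedextensions{t} \intersection \Bairespace \neq \emptyset$ maps entirely to $y$, every value of $\restriction{f}{\extendedextensions{t}}$ lies within $\epsilon$ of $y$. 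If $\restriction{f}{\extendedextensions{t}}$ took only finitely-many values then, as none of those it assumes on $\extendedextensions{t} \intersection \Bairetree$ is $y$, the preimage of a small enough open ball about $y$ under $\restriction{f}{\extendedextensions{t}}$ — which is again \Baire class one — would be the \Fsigma set $\extendedextensions{t} \intersection \Bairespace$; transporting this along the canonical homeomorphism between $\extendedextensions{t}$ and $\weakextendedBairespace$ (one carrying $\extendedextensions{t} \intersection \Bairespace$ onto $\Bairespace$) would make $\Bairespace$ an \Fsigma subset of $\weakextendedBairespace$, which the \Baire category theorem forbids, since $\Bairespace$ is a dense \Polish subspace of $\weakextendedBairespace$ yet the closure in $\weakextendedBairespace$ of every non-empty relatively-open subset of $\Bairespace$ meets $\Bairetree$.

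Granting this, I would recursively construct a monotone, incompatibility-preserving, strictly length-increasing map $\phi \from \Bairetree \to \Bairetree$ along an enumeration $\sequence{\sigma_k}[k \in \N]$ of $\Bairetree$ in which each sequence follows its proper initial segments, starting from $\phi(\emptysequence) = \emptysequence$. Given $\phi$ on $\set{\sigma_i}[i < k]$ and writing $\sigma_k = \sigma \concatenation \sequence{j}$, I would choose $M \in \N$ large enough that $\phi(\sigma) \concatenation \sequence{M}$ is incompatible with the $\phi$-images of the finitely-many siblings of $\sigma_k$ already handled, apply the regularity property with $s = \phi(\sigma) \concatenation \sequence{M}$ and $\epsilon = 2^{-k}$, and use the infinitude of values there to pick $\phi(\sigma_k) \in \Bairetree$ extending $\phi(\sigma) \concatenation \sequence{M}$ with $\image{f}{\phi(\sigma_k)}$ within $2^{-k}$ of $y$ and outside the finite set $\set{\image{f}{\phi(\sigma_i)}}[i < k]$. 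This preserves the invariants, makes $s \mapsto \image{f}{\phi(s)}$ injective, and forces the values $\image{f}{\phi(s)}$ to accumulate only at $y$; hence $\phi$ extends to $\pi_X \from \weakextendedBairespace \to \weakextendedBairespace$ with $\restriction{\pi_X}{\Bairetree} = \phi$ and $\pi_X(x) = \union[n \in \N][\phi(\restriction{x}{n})]$ for $x \in \Bairespace$, and a routine check — monotonicity and incompatibility-preservation carry preimages and images of the generating clopen sets $\extendedextensions{r}$ to sets clopen relative to $\image{\pi_X}{\weakextendedBairespace}$ — shows $\pi_X$ is a topological embedding with $\pi_X(\Bairespace) \subseteq \Bairespace = \preimage{f}{y}$. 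It remains to let $\pi_Y$ be the homeomorphism of $\image{\fzero}{\weakextendedBairespace}$ onto $\set{y} \union \set{\image{f}{\phi(s)}}[s \in \Bairetree]$ with $\pi_Y(0) = y$ and $\pi_Y(\fzero(s)) = \image{f}{\phi(s)}$, which is well-defined and a homeomorphism onto its image precisely because the $\image{f}{\phi(s)}$ are distinct and converge to $y$; the relation $f \composition \pi_X = \pi_Y \composition \fzero$ then holds by construction, both sides equalling $y$ on $\Bairespace$ and $s \mapsto \image{f}{\phi(s)}$ on $\Bairetree$. The main obstacle is the regularity property — specifically, producing \emph{infinitely-many} distinct nearby values rather than just one, the only place the failure of $\Bairespace$ to be \Fsigma enters; the rest is a standard fusion together with routine topological bookkeeping.
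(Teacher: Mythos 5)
Your proposal is correct and follows essentially the same route as the paper: Proposition \ref{Baireclassone:covers} together with a \Baire category argument produces, inside every cone, a subcone whose $f$-image lies within $\epsilon$ of $y$, and a fusion along an enumeration of $\Bairetree$ respecting initial segments then yields $\pi_X$ and $\pi_Y$, with compactness of the domains giving the embedding property. The only substantive difference is how injectivity of $\pi_Y$ is secured: the paper shrinks each new ball's radius below $d_Y(y, f(u_{s_n}))$, so the successive values are automatically distinct, whereas you first prove that each such subcone supports infinitely many $f$-values (via the failure of $\Bairespace$ to be \Fsigma in $\weakextendedBairespace$, i.e.\ Proposition \ref{Fsigma:Bairespace}) and then avoid the finitely many values already used --- a correct, slightly heavier variant of the same step.
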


\begin{proof}
  Fix a compatible metric $d_Y$ on $Y$.
  
  \begin{lemma} \label{Fsigmatoone:Bairecategory:denseopen}
    Suppose that $\epsilon > 0$. Then there is a dense open subset
    $U$ of $\weakextendedBairespace$ such that $\image{f}{U}
    \subseteq \ball{d_Y}{y}{\epsilon}$.
  \end{lemma}
  
  \begin{proof}
    By Proposition \ref{Baireclassone:covers}, there is a partition
    $\sequence{C_n}[n \in \N]$ of $\weakextendedBairespace$ into
    closed sets whose $f$-images have $d_Y$-diameter strictly less
    than $\epsilon$. Then for each non-empty open set $V \subseteq
    \weakextendedBairespace$, there exists $n \in \N$ for which
    $C_n$ is non-meager in $V$, so there is a non-empty open set
    $W \subseteq V$ such that $C_n$ is comeager in $W$. As $C_n$
    is closed, it follows that $W \subseteq C_n$, thus the diameter of
    $\image{f}{W}$ is strictly less than $\epsilon$. As $W$ necessarily
    contains a point of $\Bairespace$, it follows that $\image{f}{W}
    \subseteq \ball{d_Y}{y}{\epsilon}$. The union of the non-empty
    open sets $W \subseteq \weakextendedBairespace$ obtained in
    this way from non-empty open sets $V \subseteq
    \weakextendedBairespace$ is therefore as desired.
  \end{proof}
  
  Fix an injective enumeration $\sequence{s_n}[n \in \N]$ of
  $\Bairetree$ with the property that $s_m \extendedby s_n \implies
  m \le n$ for all $m, n \in \N$, fix a sequence $\sequence
  {\epsilon_n}[n \in \N]$ of strictly positive real numbers such that
  $0 = \lim_{n \goesto \infty} \epsilon_n$, and for each $n \in \N$, set
  $m(n+1) = \max \set{m \le n}[s_m \extendedby s_{n+1}]$. Define
  $u_\emptysequence = \emptysequence$, and recursively appeal to
  Lemma \ref{Fsigmatoone:Bairecategory:denseopen} to obtain sequences
  $u_{s_{n+1}} \in \Bairetree$, for all $n \in \N$, with the property that
  $u_{s_{m(n+1)}} \concatenation s_{n+1}(\length{s_{m(n+1)}})
  \extendedby u_{s_{n+1}}$ and $\image{f}{\extendedextensions
  {u_{s_{n+1}}}} \subseteq \ball{d_Y}{y}{\min \set{\epsilon_n, d_Y(y,
  f(u_{s_n}))}}$.
  
  Define $\pi_X \from \weakextendedBairespace \to
  \weakextendedBairespace$ by
  \begin{equation*}
    \pi_X(s) =
      \begin{cases}
        u_s & \text{if $s \in \Bairetree$, and} \\
        \union[n \in \N][u_{\restriction{s}{n}}] & \text{otherwise.}
      \end{cases}
  \end{equation*}
  Define $\pi_Y \from \image{f_0}{\weakextendedBairespace} \to \image{f}{X}$ by
  $\pi_Y(0) = y$ and $\pi_Y(f_0(s)) = f(u_s)$, for all $s \in \Bairetree$. As both of these
  functions are continuous injections with compact domains, they
  are necessarily topological embeddings, thus $\pair{\pi_X}{\pi_Y}$
  is a topological embedding of $\fzero$ into $f$.
\end{proof}

\begin{proof}[Proof of Theorem \ref{first}.]
  Proposition \ref{Fsigma:Bairespace} ensures that conditions (1) and
  (2) are mutually exclusive. To see $\neg(1) \implies (2)$, suppose that
  there exists $y \in Y$ such that $\preimage{f}{y}$ is not \Fsigma, and
  appeal to Theorem \ref{Fsigma:dichotomy} to obtain a topological
  embedding $\pi \from \weakextendedBairespace \to X$ of $\Bairespace$
  into $\preimage{f}{y}$. Proposition \ref{Fsigmatoone:Bairecategory} then
  yields a topological embedding $\pair{\pi_X}{\pi_Y}$ of $\fzero$ into $f
  \composition \pi$, and it follows that $\pair{\pi \composition \pi_X}
  {\pi_Y}$ is a topological embedding of $\fzero$ into $f$.
\end{proof}

\section{Sigma-continuous functions} \label{sigmacontinuous}

We begin with a technical but useful sufficient condition for
the topological embeddability of $\fone$.

\begin{proposition} \label{sigmacontinuous:Bairecategory}
  Suppose that $X$ and $Y$ are metric spaces, $f \from X \to Y$,
  and there are a dense \Gdelta set $C \subseteq \Bairespace$, a
  set $W \subseteq X$ intersecting the $f$-preimage of every
  singleton in a closed set, and a function $\phi \from C \to W$, such
  that both $\phi$ and $f \composition \phi$ are
  continuous, which is a homomorphism from $\restriction{\GzeroN
  [m]}{C}$ to the $\aleph_0$-dimensional dihypergraph $G_m$
  consisting of all convergent sequences $\sequence{x_n}[n \in \N]
  \in \functions{\N}{X}$ with $f(\lim_{n \goesto \infty} x_n) \neq
  \lim_{n \goesto \infty} f(x_n)$ but $\set{f(x_n)}[n \in \N] \subseteq
  \ball{d_Y}{f(\lim_{n \goesto \infty} x_n)}{1/m}$, for all $m \in \N$.
  Then there is a topological embedding of $\fone$ into $f$.
\end{proposition}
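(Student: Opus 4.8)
The plan is to construct the first coordinate $\pi_X \from \weakextendedBairespace \to X$ and to take $\pi_Y$ to be $f \composition \pi_X$, which is forced because $\fone$ is the identity map on the underlying set $\extendedBairespace$. Two remarks organize the argument. Since $\weakextendedBairespace$ is compact and metrizable, a continuous injection $\pi_X \from \weakextendedBairespace \to X$ is automatically a topological embedding; and if $\pi_X$ is built so that the diameters of the sets $\image{f}{\image{\pi_X}{\extendedextensions{\restriction{x}{n}}}}$ tend to $0$ for every $x \in \Bairespace$, then $f(\pi_X(x))$ is the unique point common to these compact sets, so defining $\pi_Y$ at $x$ as that common point makes $f \composition \pi_X = \pi_Y \composition \fone$ hold automatically at branches. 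Hence the work is (a) the recursion producing $\pi_X$ with the right shrinking behaviour and with a ``jump'' installed at each finite sequence, and (b) verifying that the forced $\pi_Y$ is a topological embedding of the non-compact space $\image{\fone}{\weakextendedBairespace}$ into $Y$.

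First I would fix compatible metrics $d_X$ and $d_Y$, an injective enumeration $\sequence{s_n}[n \in \N]$ of $\Bairetree$ with $s_m \extendedby s_n \implies m \le n$, and a sequence $\sequence{\epsilon_n}[n \in \N]$ of strictly positive reals decreasing to $0$, the remaining rate parameters being chosen during the recursion. By recursion along the enumeration I would attach to each $t \in \Bairetree$ a nonempty relatively clopen set $U_t \subseteq C$, with $U_{t \concatenation \sequence{i}} \subseteq U_t$, with the $U_t$ pairwise disjoint over $\extendedby$-incomparable $t$, and with the diameters of $U_t$, $\image{\phi}{U_t}$ and $\image{f \composition \phi}{U_t}$ all below $\epsilon_{\length{t}}$. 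The extra datum encodes the discontinuity of $\fone$ at $t$: having fixed $U_t$, choose $m_t$ with $1/m_t < \epsilon_{\length{t}}$, and, using that within every nonempty relatively open subset of the comeager set $C$ one can find the vertex set of a hyperedge of $\restriction{\GzeroN[m_t]}{C}$ (a standard density feature of $\Gzero$-type hypergraphs), pick such a hyperedge inside $U_t$; by hypothesis its $\phi$-image is a convergent sequence $\sequence{x_i}[i \in \N]$ in $X$ with $f(\lim_{i \goesto \infty} x_i) \neq \lim_{i \goesto \infty} f(x_i)$ and $\set{f(x_i)}[i \in \N] \subseteq \ball{d_Y}{f(\lim_{i} x_i)}{1/m_t}$. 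Put $\pi_X(t) = \lim_{i} x_i$ and $\rho_t = d_Y(f(\pi_X(t)), \lim_{i} f(x_i)) > 0$; after discarding an initial segment of $\sequence{x_i}$ and re-indexing, arrange that every $f(x_i)$ lies within $\rho_t/4$ of $\lim_{i} f(x_i)$. Finally choose, for each $i$, a relatively clopen set $U_{t \concatenation \sequence{i}} \subseteq \preimage{\phi}{\ball{d_X}{x_i}{\delta_i}}$ — with $\sequence{\delta_i}[i \in \N]$ decreasing to $0$ — around a point of $C$ sent by $\phi$ close to $x_i$, small enough to preserve disjointness and to let the recursion continue below $t \concatenation \sequence{i}$; and set $\pi_X(x) = $ the unique element of $\intersection[n \in \N][\closure{\image{\phi}{U_{\restriction{x}{n}}}}]$ for $x \in \Bairespace$.

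For the verification, $\pi_X$ is continuous: at a branch $x$ the images $\image{\pi_X}{\extendedextensions{\restriction{x}{n}}}$ shrink to $\pi_X(x)$, while at a finite $s$ the cone-image $\image{\pi_X}{\extendedextensions{s \concatenation \sequence{i}}}$ collapses to $\pi_X(s)$ as $i \goesto \infty$ — since it lies within $\delta_i$ of $x_i$ and $x_i \goesto \pi_X(s)$ — which is precisely what continuity at $s$ in $\weakextendedBairespace$ requires. $\pi_X$ is injective: $\extendedby$-incomparable nodes get disjoint cone-images, distinct branches split at their first difference, and $\pi_X(t)$ differs from every value $\pi_X(u)$ with $u$ strictly below $t$ (hence also from ancestor values) because $f(\pi_X(u))$ clusters within $\rho_t/4$ of $\lim_{i} f(x_i)$, which is at $d_Y$-distance $\rho_t$ from $f(\pi_X(t))$. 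So $\pi_X$ is a topological embedding. For $\pi_Y = f \composition \pi_X$: continuity at a finite sequence is automatic, since it is isolated in $\strongextendedBairespace$; continuity at a branch $x$ follows from the $\epsilon_n$-control on $\image{f \composition \phi}{U_{\restriction{x}{n}}}$ together with the jumps along the branch having sizes at most $1/m_{\restriction{x}{n}} < \epsilon_n \goesto 0$, which forces the diameters of $\image{f}{\image{\pi_X}{\extendedextensions{\restriction{x}{n}}}}$ to $0$; then $f \composition \pi_X = \pi_Y \composition \fone$ by the compactness remark, and the relative openness of $\image{\pi_Y}{\set{s}} = \set{f(\pi_X(s))}$ in $\image{f}{\image{\pi_X}{\weakextendedBairespace}}$, together with injectivity of $\pi_Y$, are exactly the separation facts above. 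Hence $\pair{\pi_X}{\pi_Y}$ is a topological embedding of $\fone$ into $f$.

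The delicate point — and the main obstacle — is the interlocking of choices in the recursive step: at each node $t$ one must fix $U_t$, the rate $m_t$, the hyperedge, the discarded initial segment, and the radii $\delta_i$ so that all three diameter bounds hold, the cone-images stay pairwise disjoint, the child-cones collapse to $\pi_X(t)$, and, above all, the jump value $f(\pi_X(t))$ stays a fixed positive distance from the $f$-image of the whole cone below $t$ and from all other jump values — it is this last requirement that makes $\pi_Y$ injective (notably on $\Bairespace$) and open onto its image. Here the hypothesis is used in full: the hyperedge drawn from $\restriction{\GzeroN[m_t]}{C}$ guarantees a genuine jump at $t$ of positive size bounded by $1/m_t$ (so the jumps fade along branches), and being able to find such hyperedges inside arbitrarily small relatively clopen pieces of $C$, with their $\phi$-images — hence the jump data — kept mutually separated down the tree, is what encodes the topology of $\fone$.
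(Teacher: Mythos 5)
Your construction of $\pi_X$ (fusion along a tree of clopen pieces of $C$, installing a jump at each finite node, and taking $\pi_Y = f \composition \pi_X$) has the same general shape as the paper's, but there is a genuine gap on the $Y$-side, precisely where the hypothesis on $W$ --- which you never use --- is needed. For $\pair{\pi_X}{\pi_Y}$ to embed $\fone$, the map $\pi_Y$ must be injective on all of $\extendedBairespace$ and must send each $\extendedextensions{s}$ and each $\set{s}$, $s \in \Bairetree$, to relatively clopen subsets of $\image{\pi_Y}{\strongextendedBairespace}$. Your separation estimates only compare a node $t$ with its own descendants (the $\rho_t/4$ bound); nothing in your recursion separates the $f$-images of incomparable cones, the jump values at distinct nodes, or the $\pi_Y$-values of distinct branches. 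Indeed, all of your conditions are compatible with $f \composition \phi$ being constant on $C$: take $X = \weakextendedBairespace$, $Y = \R$, $f = 0$ on $\Bairespace$ and $f(s) = 1/(2\length{s}+2)$ for $s \in \Bairetree$, $C = \Bairespace$, and $\phi$ the inclusion. This satisfies every hypothesis of the proposition except the one on $W$ (no admissible $W$ exists, since $\preimage{f}{0} = \Bairespace$ is not closed in $\weakextendedBairespace$), your recursion runs verbatim on it, and yet no embedding of $\fone$ can exist, because $\image{f}{X}$ is countable while $\pi_Y$ would have to be injective on the uncountable set $\extendedBairespace$. So the $W$-hypothesis is not decorative, and any correct proof must invoke it.

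The paper handles this by carrying, alongside the $X$-side data, a tree of open sets $V_s \subseteq Y$ with $\image{(f \composition \phi)}{\extensions{u_s}} \union \set{f(x_s)} \subseteq V_s$, $V_{s \concatenation \sequence{i}} \subseteq V_s$, $f(x_s) \notin \closure{\union[i \in \N][V_{s \concatenation \sequence{i}}]}$, and each $V_{s \concatenation \sequence{i}}$ disjoint from the closure of the union of its siblings; these are exactly what make $\pi_Y$ injective and its cone- and singleton-images clopen. Building them requires the sequence $y_{i,s} = f(\phi(s_{n_s}^\N \concatenation \sequence{i} \concatenation z_s))$ to admit an injective subsequence with at most one limit point, and that is where $W$ enters: an infinite constant subsequence $y_{j,s} = y$ would, since $\image{\phi}{C} \subseteq W$ and $W \intersection \preimage{f}{y}$ is closed, force $f(x_s) = y = \lim_i y_{i,s}$, contradicting the jump. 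Two smaller points: for fixed $m$, every hyperedge of $\GzeroN[m]$ has stem $s_m^\N$, so you cannot first fix $m_t$ with $1/m_t < \epsilon_{\length{t}}$ and then find an edge inside an arbitrary $U_t$; you must instead pick $m_t$ large with $s_{m_t}^\N$ extending into $U_t$ (possible because $\length{s_n^\N} = n$). Relatedly, the admissible jump size must be chosen adaptively --- the paper's $\delta_s$ with $n_s \ge 1/\delta_s$, which guarantees the jump value lands in $V_s$ --- rather than bounded by the a priori $\epsilon_{\length{t}}$; with your fixed-in-advance bounds, a child's jump value need not stay in the region controlled by its parent, which already undermines your own ``descendants cluster within $\rho_t/4$'' claim.
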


\begin{proof}
  Fix dense open sets $U_n \subseteq \Bairespace$ such that
  $\intersection[n \in \N][U_n] \subseteq C$. We will recursively
  construct sequences $\sequence{u_s}[{s \in \Bairetree[n]}]$ of
  elements of $\Bairetree$, sequences $\sequence{V_s}[{s \in
  \Bairetree[n]}]$ of open subsets of $Y$, and sequences
  $\sequence{x_s}[{s \in \Bairetree[n]}]$ of elements of $X$, for
  all $n \in \N$, such that:
  \begin{enumerate}
    \item $\forall i \in \N \forall s \in \Bairetree \ u_s
      \strictlyextendedby u_{s \concatenation \sequence{i}}$.
    \item $\forall i \in \N \forall s \in \Bairetree \ \extensions{u_{s
      \concatenation \sequence{i}}} \subseteq U_{\length{s}}$.
    \item $\forall s \in \Bairetree \ \image{(f \composition \phi)}
      {\extensions{u_s}} \union \set{f(x_s)} \subseteq V_s$.
    \item $\forall i \in \N \forall s \in \Bairetree \ V_{s
      \concatenation \sequence{i}} \subseteq V_s$.
    \item $\forall i \in \N \forall s \in \Bairetree \ \diameter{d_X}
      {\image{\phi}{\extensions{u_{s \concatenation \sequence{i}}}}}
        < 1 / \length{s}$.
    \item $\forall i \in \N \forall s \in \Bairetree \ \diameter{d_Y}
      {V_{s \concatenation \sequence{i}}} < 1 / \length{s}$.
    \item $\forall i \in \N \forall s \in \Bairetree \ \closure{\image
      {\phi}{\extensions{u_{s \concatenation \sequence{i}}}}}
        \subseteq \ball{d_X}{x_s}{1 / i}$.
    \item $\forall s \in \Bairetree \ f(x_s) \notin \closure{\union[i \in
      \N][V_{s \concatenation \sequence{i}}]}$.
    \item $\forall i \in \N \forall s \in \Bairetree \ V_{s \concatenation
      \sequence{i}} \intersection \closure{\union[j \in \N \setminus
        \set{i}][V_{s \concatenation \sequence{j}}]} = \emptyset$.
  \end{enumerate}
  We begin by setting $u_\emptysequence = \emptysequence$
  and $V_\emptysequence = Y$. Suppose now that $n \in \N$
  and we have already found $\sequence{u_s}[{s \in \Bairetree
  [\le n]}]$, $\sequence{V_s}[{s \in \Bairetree[\le n]}]$, and
  $\sequence{x_s}[{s \in \Bairetree[< n]}]$. For each $s \in
  \Bairetree[n]$, fix $\delta_s > 0$ as well as $u_s' \in \Bairetree$
  such that $u_s \extendedby u_s'$, $\extensions{u_s'} \subseteq
  U_n$, $\diameter{d_X}{\image{\phi}{\extensions{u_s'}}} < 1 / 
  n$, $\diameter{d_Y}{\image{(f \composition \phi)}{\extensions
  {u_s'}}} < 3 / 2n$, and $\ball{d_Y}{\image{(f \composition \phi)}
  {\extensions{u_s'}}}{\delta_s}\subseteq V_s$. Fix a natural
  number $n_s \ge 1 / \delta_s$ such that $u_s' \extendedby
  s_{n_s}^\N$, appeal to the \Baire category theorem
  to find $z_s \in \Bairespace$ with the property that $s_{n_s}^\N
  \concatenation \sequence{i} \concatenation z_s \in C$ for all $i
  \in \N$, and define $x_{i, s} = \phi(s_{n_s}^\N \concatenation
  \sequence{i} \concatenation z_s)$ and $y_{i, s} = f(x_{i, s})$ for
  all $i \in \N$, as well as $x_s = \lim_{i \goesto \infty} x_{i, s}$.
  The fact that $f(x_s) \neq \lim_{i \goesto \infty} y_{i, s}$ ensures
  the existence of an infinite set $I_s \subseteq \N$ for which
  $f(x_s) \notin \closure{\set{y_{i, s}}[i \in I_s]}$. Note that there
  can be no infinite set $J \subseteq I_s$ such that $\sequence
  {y_{j, s}}[j \in J]$ is constant, since otherwise the fact that
  $\image{\phi}{C} \subseteq W$ would imply that $f(x_s) =
  y_{j, s}$, for all $j \in J$. So by passing to an infinite subset
  of $I_s$, we can assume that $\sequence{y_{i, s}}[i \in I_s]$ is
  injective. By passing to a further infinite subset of $I_s$, we can
  ensure that $\sequence{y_{i, s}}[i \in I_s]$ has at most one limit
  point. By eliminating this limit point from the sequence if necessary,
  we can therefore ensure that $y_{i, s} \notin \closure{\set{y_{j, s}}
  [j \in I_s \setminus \set{i}]}$, for all $i \in I_s$. Similarly, we can
  assume that $x_s \notin \set{x_{i, s}}[i \in I_s]$. By passing one
  last time to an infinite subset of $I_s$, we can assume that $d_X
  (x_s, x_{i_{k, s}, s}) < 1 / k$ for all $k \in \N$, where $\sequence
  {i_{k, s}}[k \in \N]$ is the strictly increasing enumeration of $I_s$.
  For each $k \in \N$, fix $\epsilon_{k, s}^X > 0$ strictly less than
  $1 / k - d_X(x_s, x_{i_{k, s}, s})$, and fix $\epsilon_{k, s}^Y > 0$
  strictly less than $d_Y(f(x_s), y_{i_{k, s}, s}) / 2$ and $d_Y(y_{i, s},
  y_{i_{k, s}, s}) / 3$, for all $i \in I_s \setminus \set{i_{k, s}}$. Set
  $V_{s \concatenation \sequence{k}} = \ball{d_Y}{y_{i_{k, s}, s}}
  {\epsilon_{k, s}^Y} \intersection V_s$, and fix an initial segment
  $u_{s \concatenation \sequence{k}}$ of $s_{n_s}^\N \concatenation 
  \sequence{i_{k, s}} \concatenation z_s$ of length at least $n_s + 1$
  with the property that $\image{\phi}{\extensions{u_{s \concatenation
  \sequence{k}}}} \subseteq \ball{d_X}{x_{i_{k, s}, s}}
  {\epsilon_{k,s}^X}$ and $\image{(f \composition \phi)}{\extensions
  {u_{s \concatenation \sequence{k}}}} \subseteq V_{s \concatenation
  \sequence{k}}$. Our choice of $u_s'$ ensures that conditions (1), (2),
  and (5) hold, and along with the fact that $\phi$ is a homomorphism,
  that condition (3) holds as well. Condition (4) holds trivially, and the
  remaining conditions follow from our upper bounds on
  $\epsilon_{k, s}^X$ and $\epsilon_{k, s}^Y$. This completes the
  recursive construction.
    
  By condition (1), we obtain a continuous function $\psi \from
  \Bairespace \to \Bairespace$ by setting $\psi(s) = \union[n \in \N]
  [u_{\restriction{s}{n}}]$. Condition (2) ensures that $\image{\psi}
  {\Bairespace} \subseteq C$. Set $x_s = (\phi \composition \psi)(s)$
  for $s \in \Bairespace$, and define $\pi_X \from
  \weakextendedBairespace \to X$ and $\pi_Y \from
  \strongextendedBairespace \to Y$ by $\pi_X(s) = x_s$ and
  $\pi_Y = f \composition \pi_X$. We will show that $\pair{\pi_X}
  {\pi_Y}$ is a topological embedding of $\fone$ into $f$.
    
  \begin{lemma} \label{sigmacontinuous:Xcontainment}
    Suppose that $s \in \Bairetree$. Then $\image{\pi_X}
    {\extendedextensions{s}} \subseteq \closure{\image{\phi}
    {\extensions{u_s}}}$.
  \end{lemma}
    
  \begin{proof}
    Simply observe that
    \begin{align*}
      \image{\pi_X}{\extendedextensions{s}}
        & = \image{(\phi \composition \psi)}{\extensions{s}}
          \union \set{x_t}[t \in \extendedextensions{s} \setminus
            \extensions{s}] \\
        & \textstyle \subseteq  \image{\phi}{\extensions{u_s}}
          \union \union[t \in \extendedextensions{s} \setminus
            \extensions{s}][\closure{\image{\phi}{\extensions{u_t}}}] \\
        & \subseteq \closure{\image{\phi}{\extensions{u_s}}},
    \end{align*}
    by conditions (1) and (7).
  \end{proof}
    
  \begin{lemma} \label{sigmacontinuous:Ycontainment}
    Suppose that $s \in \Bairetree$. Then $\image{\pi_Y}
    {\extendedextensions{s}} \subseteq V_s$.
  \end{lemma}
    
  \begin{proof}
    Simply observe that
    \begin{align*}
      \image{\pi_Y}{\extendedextensions{s}}
        & = \image{(f \composition \phi \composition \psi)}
          {\extensions{s}} \union \set{f(x_t)}[t \in \extendedextensions
            {s} \setminus \extensions{s}] \\
        & \subseteq \image{(f \composition \phi)}{\extensions{u_s}}
          \union \set{f(x_t)}[t \in \extendedextensions{s} \setminus
            \extensions{s}] \\
        & \textstyle \subseteq V_s \union \union[t \in
              \extendedextensions{s} \setminus \extensions{s}][V_t] \\
        & \subseteq V_s,
    \end{align*}
    by conditions (3) and (4).
  \end{proof}
    
  To see that $\pi_X$ and $\pi_Y$ are injective, it is enough to
  check that the latter is injective. Towards this end, suppose that
  $s, t \in \extendedBairespace$ are distinct. If there is a least $n
  \le \min \set{\length{s}, \length{t}}$ with $\restriction{s}{n} \neq
  \restriction{t}{n}$, then condition (9) ensures that $V_{\restriction
  {s}{n}}$ and $V_{\restriction{t}{n}}$ are disjoint, and since Lemma 
  \ref{sigmacontinuous:Ycontainment} implies that $\pi_Y(s)$ is
  in the former and $\pi_Y(t)$ is in the latter, it follows that they are
  distinct. Otherwise, after reversing the roles of $s$ and $t$ if
  necessary, we can assume that there exists $n < \length{t}$ for
  which $s = \restriction{t}{n}$. But then condition (8) ensures that
  $\pi_Y(s) \notin V_{\restriction{t}{(n+1)}}$, while Lemma \ref
  {sigmacontinuous:Ycontainment} implies that $\pi_Y(t) \in
  V_{\restriction{t}{(n+1)}}$, thus $\pi_Y(s) \neq \pi_Y(t)$.
        
  To see that $\pi_X$ is a topological embedding, it only remains
  to show that it is continuous (since $\weakextendedBairespace$
  is compact). And for this, it is enough to check that for all $n \in
  \N$ and $s \in \weakextendedBairespace$, there is an open
  neighborhood of $s$ whose image under $\pi_X$ is a subset of 
  $\ball{d_X}{\pi_X(s)}{1 / n}$. Towards this end, observe that if $s \in
  \Bairespace$, then Lemma \ref{sigmacontinuous:Xcontainment}
  ensures that $\image{\pi_X}{\extendedextensions{\restriction{s}
  {(n+1)}}} \subseteq \closure{\image{\phi}{\extensions
  {u_{\restriction{s}{(n+1)}}}}}$, so condition (5) implies that
  $\extendedextensions{\restriction{s}{(n+1)}}$ is an open
  neighborhood of $s$ whose image under $\pi_X$ is a
  subset of $\ball{d_X}{\pi_X(s)}{1 / n}$. And if $s \in \Bairetree$,
  then Lemma \ref{sigmacontinuous:Xcontainment} ensures that
  \begin{align*}
    \textstyle \image{\pi_X}{\extendedextensions{s} \setminus
      \union[i < n][\extendedextensions{s \concatenation
        \sequence{i}}]}
      & = \textstyle \image{\pi_X}{\set{s} \union \union[i \ge n]
        [\extendedextensions{s \concatenation \sequence{i}}]} \\
      & \textstyle \subseteq \set{\pi_X(s)} \union \union[i \ge n]
        [\closure{\image{\phi}{\extensions{u_{s \concatenation
          \sequence{i}}}}}],
  \end{align*}
  so condition (7) implies that $\extendedextensions{s} \setminus
  \union[i < n][\extendedextensions{s \concatenation \sequence
  {i}}]$ is an open neighborhood of $s$ whose image under
  $\pi_X$ is a subset of $\ball{d_X}{\pi_X(s)}{1 / n}$.
    
  To see that $\pi_Y$ is continuous, it is sufficient to check that
  for all $n \in \N$ and $s \in \strongextendedBairespace$, there
  is an open neighborhood of $s$ whose image under $\pi_Y$ is
  contained in $\ball{d_Y}{\pi_Y(s)}{1 / n}$. Towards this end,
  observe that if $s \in \Bairespace$, then Lemma \ref
  {sigmacontinuous:Ycontainment} ensures that $\image{\pi_Y}
  {\extendedextensions{\restriction{s}{(n+1)}}} \subseteq
  V_{\restriction{s}{(n+1)}}$, so condition (6) implies that
  $\extendedextensions{\restriction{s}{(n+1)}}$ is an open
  neighborhood of $s$ whose image under $\pi_Y$ is
  contained in $\ball{d_Y}{\pi_Y(s)}{1 / n}$. And if $s \in \Bairetree$,
  then $\set{s}$ is an open neighborhood of $s$ whose image
  under $\pi_Y$ is a subset of $\ball{d_Y}{\pi_Y(s)}{1 / n}$.

  Before showing that $\pi_Y$ is a topological embedding, we first
  establish several lemmas.
            
  \begin{lemma} \label{sigmacontinuous:first}
    Suppose that $s \in \Bairetree$. Then $\image{\pi_Y}
    {\extendedextensions{s}} = \closure{V_s} \intersection \image
    {\pi_Y}{\strongextendedBairespace}$.
  \end{lemma}
    
  \begin{proof}
    Lemma \ref{sigmacontinuous:Ycontainment} ensures that
    $\image{\pi_Y}{\extendedextensions{s}} \subseteq \closure{V_s}
    \intersection \image{\pi_Y}{\strongextendedBairespace}$, so
    it is enough to show that $\image{\pi_Y}
    {\strongextendedBairespace \setminus \extendedextensions{s}}
    \intersection \closure{V_s} = \emptyset$. Towards this end, note
    that if $t \in \strongextendedBairespace \setminus
    \extendedextensions{s}$, then either there exists a least $n \le
    \min \set{\length{s}, \length{t}}$ for which $\restriction{s}{n}$ and
    $\restriction{t}{n}$ are incompatible, or $t \strictlyextendedby s$.
    In the former case, condition (9) implies that $\closure
    {V_{\restriction{s}{n}}}$ and $V_{\restriction{t}{n}}$ are disjoint,
    and since Lemma \ref{sigmacontinuous:Ycontainment} implies
    that $\pi_Y(t)$ is in the latter, it is not in the former. But then it is
    also not in $\closure{V_s}$, by condition (4). In the latter case,
    set $n = \length{t}$, and appeal to condition (8) to see that
    $\pi_Y(t)$ is not in $\closure{V_{\restriction{s}{(n+1)}}}$. But then
    it is also not in $\closure{V_s}$, by condition (4).
  \end{proof}
    
  \begin{lemma} \label{sigmacontinuous:second}
    Suppose that $s \in \Bairetree$. Then
    \begin{equation*}
      \textstyle
      \image{\pi_Y}{\extendedextensions{s}}
        = \image{\pi_Y}{\strongextendedBairespace} \setminus
          (\closure{\union[t \incompatible s][V_t]} \union \set{\pi_Y(t)}
            [t \strictlyextendedby s]).
    \end{equation*}
  \end{lemma}
    
  \begin{proof}
    To see that $\image{\pi_Y}{\extendedextensions{s}} \intersection
    (\closure{\union[t \incompatible s][V_t]} \union \set{\pi_Y(t)}[t
    \strictlyextendedby s]) = \emptyset$, note that if $t \incompatible s$,
    then there is a maximal $n < \min \set{\length{s}, \length{t}}$ with
    the property that $\restriction{s}{n} = \restriction{t}{n}$, in which
    case $t$ is an extension of $(\restriction{s}{n}) \concatenation
    \sequence{j}$, for some $j \in \N \setminus \set{s(n)}$. Condition
    (4) therefore ensures that
    \begin{equation*}
      \textstyle
      \closure{\union[t \incompatible s][V_t]} = \union[n < \length{s}]
        [\closure{\union[j \in \N \setminus \set{s(n)}][V_{(\restriction{s}
          {n}) \concatenation \sequence{j}}]}].
    \end{equation*}
    As Lemma \ref{sigmacontinuous:Ycontainment} implies that
    $\image{\pi_Y}{\extendedextensions{s}} \subseteq V_s$, and
    condition (4) ensures that $V_s \subseteq V_{\restriction{s}
    {(n+1)}}$ for all $n < \length{s}$, it follows from condition (9) that
    $\image{\pi_Y}{\extendedextensions{s}} \intersection \closure
    {\union[t \incompatible s][V_t]} = \emptyset$.
      
    To see that $\image{\pi_Y}{\strongextendedBairespace \setminus
    \extendedextensions{s}} \subseteq \closure{\union[t \incompatible
    s][V_t]} \union \set{\pi_Y(t)}[t \strictlyextendedby s]$, note that if
    $t \in \strongextendedBairespace \setminus \extendedextensions
    {s}$ and $t \not\strictlyextendedby s$, then there exists $n \le \min
    \set{\length{s}, \length{t}}$ such that $\restriction{s}{n}$ and
    $\restriction{t}{n}$ are incompatible, so Lemma \ref
    {sigmacontinuous:Ycontainment} ensures that $\pi_Y(t) \in \closure
    {\union[t \incompatible s][V_t]}$.
  \end{proof}
    
  \begin{lemma} \label{sigmacontinuous:third}
    Suppose that $s \in \Bairetree$. Then $\pi_Y(s)$ is the unique
    element of $\image{\pi_Y}{\strongextendedBairespace} \setminus
    (\closure{\union[t \not \extendedby s][V_t]} \union \set{\pi_Y(t)}[t
    \strictlyextendedby s])$.
  \end{lemma}
    
  \begin{proof}
    As $\closure{\union[t \not\extendedby s][V_t]} = \closure{\union[t
    \incompatible s][V_t]} \union \closure{\union[i \in \N][V_{s 
    \concatenation \sequence{i}}]}$ by condition (4), Lemma \ref
    {sigmacontinuous:second} ensures that we need only show that
    $\pi_Y(s)$ is the unique element of $\image{\pi_Y}
    {\extendedextensions{s}} \setminus \closure{\union[i \in \N][V_{s
    \concatenation\sequence{i}}]}$. Condition (8) ensures that $\pi_Y
    (s)$ is in this set, while Lemma \ref{sigmacontinuous:Ycontainment}
    implies that the other points of $\image{\pi_Y}{\extendedextensions
    {s}}$ are not.
  \end{proof}
    
  It remains to show that $\image{\pi_Y}{\extendedextensions{s}}$ and $\set
  {\pi_Y(s)}$ are clopen in $\image{\pi_Y}{\strongextendedBairespace}$,
  for all $s \in \Bairespace$. The former is a consequence of Lemmas
  \ref{sigmacontinuous:first} and \ref{sigmacontinuous:second}, while the
  latter follows from Lemma \ref{sigmacontinuous:third}.
\end{proof}

\begin{proposition} \label{closure}
  Suppose that $X$ and $Y$ are \Polish spaces, $f \from X \to Y$, $G$ is
  the $\aleph_0$-dimensional dihypergraph on $X$ consisting of all
  convergent sequences $\sequence{x_n}[n \in \N]$ such that $f(\lim_{n
  \goesto \infty} x_n) \neq \lim_{n \goesto \infty} f(x_n)$, and $W \subseteq
  X$ is $G$-independent. Then $\closure{W}$ is $G$-independent.
\end{proposition}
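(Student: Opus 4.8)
The plan is to argue directly: assuming $W$ is $G$-independent, I will show that no convergent sequence with all of its terms in $\closure{W}$ is a hyperedge of $G$. Fix compatible metrics $d_X$ on $X$ and $d_Y$ on $Y$, let $\sequence{x_n}[n \in \N]$ be a convergent sequence with all terms in $\closure{W}$, and set $x = \lim_{n \goesto \infty} x_n$, noting that $x \in \closure{W}$ too. The observation I will use twice is that $G$-independence of $W$ amounts to the statement that every convergent sequence with all terms in $W$ has its $f$-image converging to the $f$-value of its limit; in particular, for each $w \in \closure{W}$ and each sequence in $W$ converging to $w$, the $f$-images of its terms converge to $f(w)$, since otherwise that sequence would be a hyperedge of $G$ lying inside $W$.

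Granting this, I would first, for every $n \in \N$, fix a sequence in $W$ converging to $x_n$ (the constant sequence if $x_n \in W$), use the observation to see that its $f$-image converges to $f(x_n)$, and then select a single term $w_n \in W$ of it with $d_X(w_n, x_n) < 2^{-n}$ and $d_Y(f(w_n), f(x_n)) < 2^{-n}$. Because $d_X(w_n, x) \le d_X(w_n, x_n) + d_X(x_n, x) \goesto 0$, the sequence $\sequence{w_n}[n \in \N]$ lies in $W$ and converges to $x$, so a second application of the observation gives $f(w_n) \goesto f(x)$. Since $d_Y(f(x_n), f(x)) \le d_Y(f(x_n), f(w_n)) + d_Y(f(w_n), f(x)) < 2^{-n} + d_Y(f(w_n), f(x))$, it follows that $f(x_n) \goesto f(x)$, so $\sequence{x_n}[n \in \N]$ is not a hyperedge of $G$.

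The step I expect to be the crux is the replacement of $x_n$ by $w_n$: one must move each $x_n$ to a point of $W$ that is close to it simultaneously in $X$ and, through $f$, in $Y$, and it is precisely the $G$-independence of $W$ — applied to a sequence in $W$ converging to $x_n$ — that supplies the second kind of closeness. Once this is arranged, the remaining points are routine: the diagonal sequence $\sequence{w_n}[n \in \N]$ still converges to $x$, which lets $G$-independence be invoked a second time, and the degenerate situations (eventually-constant sequences, or terms of $\sequence{x_n}[n \in \N]$ already in $W$) are absorbed by permitting the approximating sequences to be constant.
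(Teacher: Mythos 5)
Your argument is correct and coincides with the paper's own proof: both approximate each term of a convergent sequence in $\closure{W}$ by a point of $W$ that is close simultaneously in $X$ and, via $G$-independence of $W$, in $Y$ under $f$, and then apply $G$-independence a second time to the resulting diagonal sequence. The only differences are cosmetic (your $2^{-n}$ bounds versus the paper's $1/n$, and your explicit remark about constant approximating sequences).
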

  
\begin{proof}
  Fix compatible metrics $d_X$ and $d_Y$ on $X$ and $Y$, respectively.
  We must show that if $x = \lim_{n \goesto \infty} \overline{w}_n$
  and each $\overline{w}_n$ is in $\closure{W}$, then $f(x) = \lim_{n
  \goesto \infty} f(\overline{w}_n)$. For each $n \in \N$, write $\overline
  {w}_n = \lim_{m \goesto \infty} w_{m, n}$, where each $w_{m, n}$ is
  in $W$. The fact that $W$ is $G$-independent then ensures that
  $f(\overline{w}_n) = \lim_{m \goesto \infty} f(w_{m, n})$. Fix $m_n \in
  \N$ with the property that both $d_X(w_{m_n, n}, \overline{w}_n)$ and $d_Y
  (f(w_{m_n, n}), f(\overline{w}_n))$ are at most $1 / n$. It follows
  that $x = \lim_{n \goesto \infty} w_{m_n, n}$, so one more appeal to
  the fact that $W$ is $G$-independent yields that $f(x) = \lim_{n
  \goesto \infty} f(w_{m_n, n}) = \lim_{n \goesto \infty} f(\overline{w}_n)$.
\end{proof}
  
\begin{proof}[Proof of Theorem \ref{second}.]
  Clearly, if (1) holds then $\preimage{f}{C}$ is \Fsigma for every closed
  set $C \subseteq Y$. Hence conditions (1) and (2) are mutually exclusive.
  To see that at least one of them holds, let $G$ denote the
  $\aleph_0$-dimensional dihypergraph on $X$ consisting of all convergent
  sequences $\sequence{x_n}[n \in \N]$ such that $f(\lim_{n \goesto \infty}
  x_n) \neq \lim_{n \goesto \infty} f(x_n)$.
  
  As $f$ is continuous on a closed set if and only if the set in question is
  $G$-independent, Proposition \ref{closure} ensures that if $X$ is a union
  of countably-many $G$-independent sets, then $f$ is $\sigma$-continuous
  with closed witnesses. We can therefore focus on the case that $X$ is not a
  union of countably-many $G$-independent sets. While it is not difficult
  to see that condition (1) fails in this case, simply applying the dichotomy
  for $\aleph_0$-dimensional analytic dihypergraphs of uncountable
  chromatic number (see \cite[Theorem 1.6]{Lecomte:Hypergraph}
  or \cite[Theorem 18]{Miller:Hypergraph}) will not yield the sort of
  homomorphism we require. So instead, we will use our further
  assumptions to obtain a homomorphism with stronger properties.
  
  Fix a compatible metric $d_Y$ on $Y$, and for each $\epsilon > 0$, let
  $G_\epsilon$ denote the $\aleph_0$-dimensional dihypergraph on $X$
  consisting of all sequences $\sequence{x_n}[n \in \N] \in G$ with $\set
  {f(x_n)}[n \in \N] \subseteq \ball{d_Y}{f(\lim_{n \goesto \infty} x_n)}
  {\epsilon}$. Note that if $B \subseteq X$ is a $G_\epsilon$-independent set
  and $C \subseteq X$ is a closed set whose $f$-image has $d_Y$-diameter
  strictly less than $\epsilon$, then $B \intersection C$ is $G$-independent.
  As Proposition \ref{Baireclassone:covers} ensures that $X$ is a
  union of countably-many closed sets whose $f$-images have
  $d_Y$-diameter strictly less than $\epsilon$, it follows that
  every $G_\epsilon$-independent set is a union of countably-many
  $G$-independent sets.
  
  We say that a set $W \subseteq X$ is \definedterm{eventually $\sequence
  {G_\epsilon}[\epsilon > 0]$-independent} if there exists $\epsilon > 0$ for
  which it is $G_\epsilon$-independent.  As $X$ is not a union of countably-many
  $G$-independent sets, it follows that it is not a union of countably-many
  eventually $\sequence{G_\epsilon}[\epsilon > 0]$-independent sets.
  Again, however, simply applying the sequential $\aleph_0$-dimensional
  analog of the $\Gzero$ dichotomy theorem (i.e., the straightforward
  common generalization of \cite[Theorems 18 and 21]{Miller:Survey})
  will not yield the sort of homomorphism we require, and we must once
  more appeal to our further assumptions.
    
  Theorem \ref{Fsigma:functiondichotomy} ensures that $X$ is a union
  of countably-many analytic sets whose intersection with the $f$-preimage
  of each singleton is closed. As $X$ is not a union of countably-many
  eventually $\sequence{G_\epsilon}[\epsilon > 0]$-independent sets, it
  follows that there is an analytic set $A \subseteq X$, whose intersection
  with the $f$-preimage of each singleton is closed, that is not a union of
  countably-many eventually $\sequence{G_\epsilon}[\epsilon > 0]$-independent
  sets.
  
  At long last, we now appeal to the sequential $\aleph_0$-dimensional analog
  of the $\Gzero$ dichotomy theorem (i.e., the straightforward common
  generalization of \cite[Theorems 18 and 21]{Miller:Survey}) to obtain a dense
  \Gdelta set $C \subseteq \Bairespace$ and a continuous function $\phi \from
  C \to A$ which is a homomorphism from $\restriction{\GzeroN[n]}{C}$ to
  $G_{1/n}$, for all $n \in \N$. In fact, by first replacing the given topology of
  $X$ with a finer \Polish topology consisting only of \Borel sets but with
  respect to which $f$ is continuous (see, for example, \cite[Theorem 13.11]
  {Kechris}), we can ensure that $f \composition \phi$ is continuous as well.
  An application of Proposition \ref{sigmacontinuous:Bairecategory} therefore
  yields the desired topological embedding of $\fone$ into $f$.
\end{proof}

\section*{Appendix: \Fsigma sets}
\renewcommand{\thesection}{\Alph{section}}
\setcounter{section}{1}
\setcounter{lemma}{0}

We begin the appendix with a straightforward observation.

\begin{proposition} \label{Fsigma:Bairespace}
  \begin{enumerate}
  \renewcommand{\labelenumi}{(\alph{enumi})}
  \item The set $\Bairespace$ is not an \Fsigma subspace of
    $\weakextendedBairespace$.
  \item The set $\Bairespace$ is a closed subspace of
    $\strongextendedBairespace$.
  \end{enumerate}
\end{proposition}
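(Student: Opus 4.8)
The statement to prove is Proposition~\ref{Fsigma:Bairespace}, which has two parts. The plan is to analyze each part directly from the definitions of the two topologies on $\extendedBairespace$.

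**Part (b).** This is the easier half. Recall that $\strongextendedBairespace$ is the set $\extendedBairespace$ with the topology generated by declaring each $\extendedextensions{s}$ and each singleton $\set{s}$, for $s \in \Bairetree$, to be clopen. To see that $\Bairespace$ is closed, I would show that its complement $\Bairetree = \extendedBairespace \setminus \Bairespace$ is open, which is immediate: for each $s \in \Bairetree$, the singleton $\set{s}$ is open by fiat, so $\Bairetree = \union[s \in \Bairetree][\set{s}]$ is open. Hence $\Bairespace$ is closed in $\strongextendedBairespace$.

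**Part (a).** Here the point is that in $\weakextendedBairespace$ — the topology generated only by the sets $\extendedextensions{s}$ — the finite sequences are \emph{not} isolated, and in fact $\Bairetree$ is dense: every nonempty basic open set $\extendedextensions{s}$ contains $s$ itself. Consequently $\Bairespace$ is a dense $G_\delta$ subset of $\weakextendedBairespace$ (it is $G_\delta$ because $\Bairespace = \intersection[n \in \N][\union[{s \in \Bairetree[n]}][\extendedextensions{s}]]$, each such union being open). If $\Bairespace$ were also \Fsigma, it would follow that $\Bairetree$, being the complement of a dense $G_\delta$ set, is meager in $\weakextendedBairespace$; but $\Bairetree$ is a \emph{countable dense} set in a space without isolated points, hence comeager would fail only if each $\set{s}$ is nowhere dense — and indeed each $\set{s}$ is nowhere dense since $\extendedextensions{s} \setminus \set{s} \supseteq \extendedextensions{s \concatenation \sequence{0}}$ is a nonempty open subset of $\extendedextensions{s}$ missing $s$. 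Thus $\Bairetree$ is meager, and then both $\Bairespace$ and $\Bairetree$ would be comeager, contradicting the fact that they are disjoint while $\weakextendedBairespace$ is a nonempty Baire space (it is compact Hausdorff, or alternatively \Polish). Therefore $\Bairespace$ is not \Fsigma in $\weakextendedBairespace$.

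**Main obstacle.** The only subtlety is making sure $\weakextendedBairespace$ is genuinely a Baire space so the meager-set argument bites; this is handled by noting it is compact (each $\extendedextensions{s}$ is compact, being a closed subset under the natural identification, and the whole space is covered appropriately), or by exhibiting a compatible complete metric. An alternative, perhaps cleaner, route to part~(a) avoids category entirely: observe that any \Fsigma subset of a space carries an induced decomposition into relatively closed pieces, and a relatively closed subset of $\weakextendedBairespace$ meeting $\Bairespace$ in a nonempty relatively open set of $\Bairespace$ must contain a finite sequence by density of $\Bairetree$ in each $\extendedextensions{s}$ — so no countable family of closed sets covering $\Bairespace$ can avoid $\Bairetree$. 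I would present whichever version is shortest, most likely the Baire-category one.
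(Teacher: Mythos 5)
Your proof is correct and follows essentially the same route as the paper's: part (b) is the observation that $\Bairetree$ is open in $\strongextendedBairespace$ because its singletons are, and part (a) is the same Baire-category argument, namely that $\Bairespace$ is a dense \Gdelta in $\weakextendedBairespace$ while $\Bairetree$ is also dense, so if $\Bairespace$ were \Fsigma then $\Bairetree$ would be a disjoint dense \Gdelta, contradicting the Baire category theorem in the compact (hence Polish, hence Baire) space $\weakextendedBairespace$. The only cosmetic point is that the middle of your part (a) tangles together two phrasings of the same contradiction (that $\Bairetree$ would be simultaneously meager and comeager), but all the facts invoked are correct and the argument closes exactly as in the paper.
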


\begin{proof}
  To see (a), note that a subset of a topological space is \definedterm{\Gdelta} if it is an
  intersection of countably-many open sets. As $\Bairetree$ is
  countable and $\Bairespace$ is dense in $\weakextendedBairespace$,
  it follows that $\Bairespace$ is a dense \Gdelta subspace of
  $\weakextendedBairespace$. As $\Bairetree$ is also dense in
  $\weakextendedBairespace$, the \Baire category theorem (see, for
  example, \cite[Theorem 8.4]{Kechris}) ensures that it is not a \Gdelta
  subspace of $\weakextendedBairespace$, thus $\Bairespace$ is not
  an \Fsigma subspace of $\weakextendedBairespace$.
  
  To see (b), note that $\set{s}$ is clopen in $\strongextendedBairespace$ for all $s
  \in \Bairetree$, so $\Bairetree$ is open in
  $\strongextendedBairespace$, thus $\Bairespace$ is closed in
  $\strongextendedBairespace$.  
\end{proof}

An \definedterm{$\aleph_0$-dimensional dihypergraph} on a set
$X$ is a set of non-constant elements of $\functions{\N}{X}$. A
\definedterm{homomorphism} from an $\aleph_0$-dimensional
dihypergraph $G$ on $X$ to an $\aleph_0$-dimensional dihypergraph
$H$ on $Y$ is a function $\phi \from X \to Y$ sending elements of $G$
to elements of $H$.

Fix sequences $s_n^\N \in \functions{n}{\N}$ such that $\forall s \in
\Bairetree \exists n \in \N \ s \extendedby s_n^\N$, and define
$\aleph_0$-dimensional dihypergraphs on $\Bairespace$ by setting
\begin{equation*}
  \GzeroN[n] = \set{\sequence{s_n^\N \concatenation \sequence{i}
    \concatenation z}[i \in \N]}[z \in \Bairespace],
\end{equation*}
for all $n \in \N$, and $\GzeroN = \union[n \in \N][{\GzeroN[n]}]$.

We now establish a technical but useful sufficient condition for
the topological embeddability of $\Bairespace$.

\begin{proposition} \label{Fsigma:Bairecategory}
  Suppose that $X$ is a metric space, $Y \subseteq X$ is a set, and
  there are a dense \Gdelta set $C \subseteq \Bairespace$ and a
  continuous homomorphism $\phi \from C \to Y$ from $\restriction
  {\GzeroN}{C}$ to the $\aleph_0$-dimensional dihypergraph
  \begin{equation*}
    \textstyle G = \set{\sequence{y_n}[n \in \N] \in \functions{\N}{Y}}
      [\exists x \in X \setminus Y \ x = \lim_{n \goesto \infty} y_n].
  \end{equation*}
  Then there is a topological embedding $\pi \from
  \weakextendedBairespace \to X$ of $\Bairespace$ into $Y$.
\end{proposition}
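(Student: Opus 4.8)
The plan is to recursively construct a Lusin-type scheme indexed by $\Bairetree$ that simultaneously encodes the tree structure of $\weakextendedBairespace$ and forces the limit points demanded by the target dihypergraph $G$. Concretely, I would fix dense open sets $U_n \subseteq \Bairespace$ with $\intersection[n \in \N][U_n] \subseteq C$, and build sequences $\sequence{u_s}[{s \in \Bairetree}]$ of elements of $\Bairetree$ together with sequences $\sequence{x_s}[{s \in \Bairetree}]$ of points of $X$, with the properties: (1) $u_s \strictlyextendedby u_{s \concatenation \sequence{i}}$ for all $i \in \N$; (2) $\extensions{u_{s \concatenation \sequence{i}}} \subseteq U_{\length{s}}$; (3) $\diameter{d_X}{\image{\phi}{\extensions{u_{s \concatenation \sequence{i}}}}} < 1/\length{s}$; (4) $\closure{\image{\phi}{\extensions{u_{s \concatenation \sequence{i}}}}} \subseteq \ball{d_X}{x_s}{1/i}$; and (5) $x_s \notin \closure{\union[i \in \N][\closure{\image{\phi}{\extensions{u_{s \concatenation \sequence{i}}}}}]}$ (or an analogous separation condition ensuring the points $x_s$ are not accumulated by the wrong branches). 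The construction is driven exactly as in Proposition \ref{sigmacontinuous:Bairecategory}: at node $s$ with $u_s$ already chosen, pass to a longer $u_s'$ inside $U_{\length{s}}$ with small $\phi$-image, find an index $n_s$ with $u_s' \extendedby s_{n_s}^\N$, use the \Baire category theorem to pick $z_s \in \Bairespace$ with $s_{n_s}^\N \concatenation \sequence{i} \concatenation z_s \in C$ for all $i$, set $x_{i,s} = \phi(s_{n_s}^\N \concatenation \sequence{i} \concatenation z_s)$, and exploit that $\sequence{x_{i,s}}[i \in \N]$ is an element of $\restriction{\GzeroN}{C}$, hence its $\phi$-image is in $G$, hence it converges to some $x_s \in X \setminus Y$; then thin out to an infinite subset along which the convergence is fast and the separation conditions hold, and take initial segments of the relevant branches as the $u_{s \concatenation \sequence{i}}$.

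From the scheme I would define $\psi \from \Bairespace \to \Bairespace$ by $\psi(s) = \union[n \in \N][u_{\restriction{s}{n}}]$, which is continuous by (1) and lands in $C$ by (2); then set $\pi(s) = (\phi \composition \psi)(s)$ for $s \in \Bairespace$ and $\pi(t) = x_t$ for $t \in \Bairetree$. The analogue of Lemma \ref{sigmacontinuous:Xcontainment}, namely $\image{\pi}{\extendedextensions{t}} \subseteq \closure{\image{\phi}{\extensions{u_t}}}$, follows from (1) and (4) by the same computation. Continuity of $\pi$ at points of $\Bairespace$ is immediate from (3) via this containment; continuity at $t \in \Bairetree$ uses that $\extendedextensions{t} \setminus \union[i < n][\extendedextensions{t \concatenation \sequence{i}}] = \set{t} \union \union[i \ge n][\extendedextensions{t \concatenation \sequence{i}}]$ is an open neighborhood of $t$ in $\weakextendedBairespace$ whose $\pi$-image lies in $\ball{d_X}{x_t}{1/n}$ by (4). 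Injectivity follows from the separation conditions: distinct $s, t \in \extendedBairespace$ either split at some finite level, where (3) together with (4)/(5) separates their images, or one strictly extends the other, where (5) keeps $x_s$ out of the closure of the extensions below it. Since $\weakextendedBairespace$ is compact and $X$ Hausdorff, $\pi$ is automatically a topological embedding. Finally, $\preimage{\pi}{Y} = \Bairespace$ because $\pi(s) \in \image{\phi}{C} \subseteq Y$ for $s \in \Bairespace$, whereas $\pi(t) = x_t \in X \setminus Y$ for $t \in \Bairetree$ by the defining property of $G$.

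The main obstacle is bookkeeping the separation conditions so that injectivity and the computation $\preimage{\pi}{Y} = \Bairespace$ both go through: one must ensure not merely that each $x_s$ lies outside $Y$, but that the closed $\phi$-image blocks $\closure{\image{\phi}{\extensions{u_{s \concatenation \sequence{i}}}}}$ are pairwise disjoint and each omits $x_s$, and that no $x_s$ coincides with, or is a limit of, the points associated to incompatible or extending nodes. This is exactly the role played by conditions (7)--(9) in Proposition \ref{sigmacontinuous:Bairecategory}, and here it is somewhat simpler because there is no target space $Y'$ to track and no clopenness to verify — we only need a topological embedding of the set $\Bairespace$, not of a function. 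The thinning arguments (pass to an infinite subset so that the sequence is injective, has at most one limit point, then delete that limit point, then speed up the convergence) are the same moves already used in the proof of Proposition \ref{sigmacontinuous:Bairecategory} and present no new difficulty.
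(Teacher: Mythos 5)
This is essentially the paper's own proof: the same Lusin-scheme construction with your conditions (1)--(4), the same appeal to the \Baire category theorem to produce the limit points $x_s \in X \setminus Y$, the same containment lemma, and the same injectivity/continuity/compactness argument, so the approach matches. One small correction: your displayed condition (5) is incompatible with (4), since the blocks $\closure{\image{\phi}{\extensions{u_{s \concatenation \sequence{i}}}}}$ shrink toward $x_s$ and hence $x_s$ lies in the closure of their union; the separation conditions you describe in your final paragraph (each block omits $x_s$, and distinct blocks at the same node are pairwise disjoint) are exactly the paper's conditions (5) and (6), and they are what the injectivity argument actually uses.
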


\begin{proof}
  Fix dense open sets $U_n \subseteq \Bairespace$ such that $\intersection[n \in \N][U_n]
  \subseteq C$. We will recursively construct sequences $\sequence{u_s}[{s \in
  \Bairetree[n]}]$ of elements of $\Bairetree$ and sequences $\sequence{x_s}
  [{s \in \Bairetree[n]}]$ of elements of $X$, for all $n \in \N$, such that:
  \begin{enumerate}
    \item $\forall i \in \N \forall s \in \Bairetree \ u_s \strictlyextendedby u_{s
      \concatenation \sequence{i}}$.
    \item $\forall i \in \N \forall s \in \Bairetree \ \extensions{u_{s \concatenation
      \sequence{i}}} \subseteq U_{\length{s}}$.
    \item $\forall i \in \N \forall s \in \Bairetree \ \diameter{d_X}{\image{\phi}
      {\extensions{u_{s \concatenation \sequence{i}}}}} < 1 / \length{s}$.
    \item $\forall i \in \N \forall s \in \Bairetree \ \closure{\image{\phi}{
      \extensions{u_{s \concatenation \sequence{i}}}}} \subseteq \ball{d_X}{x_s}{1/i}$.
    \item $\forall i \in \N \forall s \in \Bairetree \ x_s \notin \closure
      {\image{\phi}{\extensions{u_{s \concatenation \sequence{i}}}}}$.
    \item $\forall i, j \in \N \forall s \in \Bairetree \ (i \neq j \implies \closure
      {\image{\phi}{\extensions{u_{s \concatenation \sequence{i}}}}} \intersection
        \closure{\image{\phi}{\extensions{u_{s \concatenation \sequence{j}}}}} =
          \emptyset)$.
  \end{enumerate}
  We begin by setting $u_\emptysequence = \emptysequence$. Suppose now that
  $n \in \N$ and we have already found $\sequence{u_s}[{s \in \Bairespace[\le n]}]$
  and $\sequence{x_s}[{s \in \Bairespace[<n]}]$. For each $s \in \Bairetree[n]$, fix
  $u_s' \in \Bairetree$ such that $u_s \extendedby u_s'$, $\extensions{u_s'} \subseteq
  U_n$, and $\diameter{d_X}{\image{\phi}{\extensions{u_s'}}} < 1 / n$, fix $n_s \in \N$
  for which $u_s' \extendedby s_{n_s}^\N$, and appeal to the \Baire category theorem
  to find $z_s \in \Bairespace$ with the property that $s_{n_s}^\N \concatenation
  \sequence{i} \concatenation z_s \in C$, for all $i \in \N$.
  Set $y_{i, s} = \phi(s_{n_s}^\N \concatenation \sequence{i} \concatenation z_s)$ for all $i
  \in \N$, as well as $x_s = \lim_{n \goesto \infty} y_{i, s}$. As $x_s \notin \set{y_{i, s}}[i \in
  \N]$, there is an infinite set $I_s \subseteq \N$ for which $\sequence{y_{i, s}}[i \in I_s]$ is
  injective. By passing to an infinite subset of $I_s$, we can assume that $d_X
  (x_s, y_{i_{k, s}, s}) < 1 / k$ for all $k \in \N$, where $\sequence{i_{k, s}}[k \in \N]$ is the
  strictly increasing enumeration of $I_s$. For each $k \in \N$, fix $\epsilon_{k, s} > 0$
  strictly less than $1 / k - d_X(x_s, y_{i_{k, s}, s})$, $d_X(x_s, y_{i_{k, s}, s})$, and $d_X
  (y_{i, s}, y_{i_{k, s}, s}) / 2$ for all $i \in I_s \setminus \set{i_{k, s}}$, and fix an initial segment
  $u_{s \concatenation \sequence{k}}$ of $s_{n_s}^\N \concatenation \sequence
  {i_{k, s}} \concatenation z_s$ of length at least $n_s + 1$ with the property that $\image
  {\phi}{\extensions{u_{s \concatenation \sequence{k}}}} \subseteq \ball{d_X}
  {y_{i_{k, s}, s}}{\epsilon_{k, s}}$. Our choice of $u_s'$ ensures that conditions (1) -- (3)
  hold, and our strict upper bounds on $\epsilon_{k, s}$ yield the remaining conditions.
  This completes the recursive construction.

  Condition (1) ensures that we obtain a function $\psi \from \Bairespace \to
  \Bairespace$ by setting $\psi(s) = \union[n \in \N][u_{\restriction{s}{n}}]$,
  and condition (2) implies that $\image{\psi}{\Bairespace} \subseteq C$.
  Set $x_s = (\phi \composition \psi)(s)$ for $s \in \Bairespace$, and
  define $\pi \from \weakextendedBairespace \to X$ by $\pi(s) = x_s$.
  We will show that $\pi$ is a topological embedding of $\Bairespace$ into $Y$.

  \begin{lemma} \label{Fsigma:containment}
    Suppose that $s \in \Bairetree$. Then $\image{\pi}{\extendedextensions{s}}
    \subseteq \closure{\image{\phi}{\extensions{u_s}}}$.
  \end{lemma}
  
  \begin{proof}
    Simply observe that
    \begin{align*}
      \image{\pi}{\extendedextensions{s}}
        & = \image{(\phi \composition \psi)}{\extensions{s}}
          \union \set{x_t}[t \in \extendedextensions{s} \setminus \extensions{s}] \\
        & \textstyle \subseteq  \image{\phi}{\extensions{u_s}} \union \union[t \in 
          \extendedextensions{s} \setminus \extensions{s}][\closure{\image{\phi}
            {\extensions{u_t}}}] \\
        & \subseteq \closure{\image{\phi}{\extensions{u_s}}},
    \end{align*}
    by conditions (1) and (4).
  \end{proof}

  To see that $\pi$ is injective, suppose that $s, t \in \extendedBairespace$
  are distinct. If there is a least $n \le \min \set{\length{s}, \length{t}}$ with
  $\restriction{s}{n} \neq \restriction{t}{n}$, then condition (6) ensures that
  $\closure{\image{\phi}{\extensions{u_{\restriction{s}{n}}}}}$ and $\closure
  {\image{\phi}{\extensions{u_{\restriction{t}{n}}}}}$ are disjoint, and since
  Lemma \ref{Fsigma:containment} implies that $\pi(s)$ is in the former and
  $\pi(t)$ is in the latter, it follows that they are distinct. Otherwise, after
  reversing the roles of $s$ and $t$ if necessary, we can assume that there
  exists $n < \length{t}$ for which $s = \restriction{t}{n}$. But then condition (5)
  ensures that $\pi(s) \notin \closure{\image{\phi}{\extensions{u_{\restriction
  {t}{(n+1)}}}}}$, while Lemma \ref{Fsigma:containment} implies that $\pi(t)
  \in \closure{\image{\phi}{\extensions{u_{\restriction{t}{(n+1)}}}}}$, thus
  $\pi(s) \neq \pi(t)$.
  
  As $\weakextendedBairespace$ is compact, it only remains to check that
  $\pi$ is continuous. And for this, it is enough to check that for all $n \in \N$
  and $s \in \weakextendedBairespace$, there is an open neighborhood of
  $s$ whose image under $\pi$ is a subset of $\ball{d_X}{\pi(s)}{1 / n}$.
  Towards this end, note first that if $s \in \Bairespace$, then Lemma \ref
  {Fsigma:containment} ensures that $\image{\pi}{\extendedextensions
  {\restriction{s}{(n+1)}}} \subseteq \closure{\image{\phi}{\extensions
  {u_{\restriction{s}{(n+1)}}}}}$, so condition (3) implies that
  $\extendedextensions{\restriction{s}{(n+1)}}$ is an open neighborhood of
  $s$ whose image under $\pi$ is a subset of $\ball{d_X}{\pi(s)}{1 / n}$.
  On the other hand, if $s \in \Bairetree$, then Lemma \ref
  {Fsigma:containment} ensures that
  \begin{align*}
    \textstyle \image{\pi}{\extendedextensions{s} \setminus \union[i < n]
      [\extendedextensions{s \concatenation \sequence{i}}]}
      & = \textstyle \image{\pi}{\set{s} \union \union[i \ge n]
        [\extendedextensions{s \concatenation \sequence{i}}]} \\
      & \textstyle \subseteq \set{\pi(s)} \union \union[i \ge n][\closure
        {\image{\phi}{\extensions{u_{s \concatenation \sequence{i}}}}}],
  \end{align*}
  so condition (4) implies that $\extendedextensions{s} \setminus \union
  [i < n][\extendedextensions{s \concatenation \sequence{i}}]$ is an open
  neighborhood of $s$ whose image under $\pi$ is a subset of $\ball{d_X}
  {\pi(s)}{1 / n}$.
\end{proof}

As a corollary, we obtain the following dichotomy theorem characterizing
the family of \Borel sets which are \Fsigma.

\begin{theorem}[\Hurewicz] \label{Fsigma:dichotomy}
  Suppose that $X$ is a \Polish space and $B \subseteq X$ is \Borel.
  Then exactly one of the following holds:
  \begin{enumerate}
    \item The set $B$ is \Fsigma.
    \item There is a topological embedding $\pi \from
      \weakextendedBairespace \to X$ of $\Bairespace$ into $B$.
  \end{enumerate}
\end{theorem}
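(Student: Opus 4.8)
The plan is to derive the theorem from Proposition \ref{Fsigma:Bairecategory} together with \Lecomte's $\aleph_0$-dimensional analog of the $\Gzero$ dichotomy theorem (see \cite[Theorem 1.6]{Lecomte:Hypergraph} or \cite[Theorem 18]{Miller:Hypergraph}), applied to a suitable dihypergraph on $X$. Mutual exclusivity of the two alternatives is immediate: if $B$ were \Fsigma and $\pi \from \weakextendedBairespace \to X$ were a topological embedding of $\Bairespace$ into $B$, then $\Bairespace = \preimage{\pi}{B}$ would be the preimage of an \Fsigma subset of $X$ under a continuous map, hence an \Fsigma subset of $\weakextendedBairespace$, contradicting Proposition \ref{Fsigma:Bairespace}.

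For the main direction, suppose that $B$ is not \Fsigma, and fix a compatible metric on $X$. Let $G$ be the $\aleph_0$-dimensional dihypergraph on $X$ consisting of all convergent sequences $\sequence{y_n}[n \in \N] \in \functions{\N}{X}$ such that $\set{y_n}[n \in \N] \subseteq B$ but $\lim_{n \goesto \infty} y_n \in X \setminus B$; each such sequence is automatically non-constant, and since $B$ is \Borel the set $G$ is itself \Borel, hence analytic. I would first verify that $X$ is not a union of countably-many \Borel $G$-independent sets. Indeed, if $X = \union[n \in \N][B_n]$ with each $B_n$ \Borel and $G$-independent, then $G$-independence of $B_n$ forces $\closure{B_n \intersection B} \subseteq B$, since any convergent sequence in $B_n \intersection B$ whose limit lies outside $B$ would be an element of $G$ with all of its terms in $B_n$; but then $B = \union[n \in \N][B_n \intersection B] \subseteq \union[n \in \N][\closure{B_n \intersection B}] \subseteq B$, exhibiting $B$ as \Fsigma, a contradiction.

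An appeal to \Lecomte's dichotomy then produces a continuous homomorphism $\phi \from \Bairespace \to X$ from $\GzeroN$ to $G$. Set $C = \union[n \in \N][\extensions{s_n^\N}]$; the defining property of the sequences $s_n^\N$ ensures that $C$ is a dense open, and therefore dense \Gdelta, subset of $\Bairespace$. Every element of $C$ occurs as a coordinate of some element of $\GzeroN$, and the $\phi$-image of each element of $\GzeroN$ is an element of $G$, whose coordinates all lie in $B$; it follows that $\image{\phi}{C} \subseteq B$. Moreover, all coordinates of all elements of $\GzeroN$ lie in $C$, so $\restriction{\GzeroN}{C} = \GzeroN$, and consequently $\restriction{\phi}{C} \from C \to B$ is a continuous homomorphism from $\restriction{\GzeroN}{C}$ to the $\aleph_0$-dimensional dihypergraph $\set{\sequence{y_n}[n \in \N] \in \functions{\N}{B}}[\exists x \in X \setminus B \ x = \lim_{n \goesto \infty} y_n]$. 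An application of Proposition \ref{Fsigma:Bairecategory}, taking $Y = B$ and letting $C$ play the role of its dense \Gdelta set, then yields a topological embedding $\pi \from \weakextendedBairespace \to X$ of $\Bairespace$ into $B$, which completes the proof.

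The step I expect to require the most care is the passage to the set $C$ of coordinates: the homomorphism coming from the dichotomy need not map all of $\Bairespace$ into $B$, but only the dense open set of points that genuinely appear as coordinates of hyperedges of $\GzeroN$ --- and it is precisely this flexibility (an arbitrary dense \Gdelta domain rather than all of $\Bairespace$) for which Proposition \ref{Fsigma:Bairecategory} was designed. The remaining points --- that $G$ is analytic and that the closure computation above is valid --- are routine.
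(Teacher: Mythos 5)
Your argument follows the paper's proof almost exactly: the same dihypergraph $G$ of convergent sequences from $B$ with limit outside $B$, the same observation that a countable cover by $G$-independent sets would exhibit $B$ as \Fsigma (via closures), the same mutual-exclusivity argument through Proposition \ref{Fsigma:Bairespace}, and the same final appeal to Proposition \ref{Fsigma:Bairecategory}. The one point where you diverge is also the one point where you overreach: you quote the $\aleph_0$-dimensional dichotomy as producing a continuous homomorphism $\phi \from \Bairespace \to X$ from $\GzeroN$ to $G$ defined on \emph{all} of $\Bairespace$. What \cite[Theorem 1.6]{Lecomte:Hypergraph} and \cite[Theorem 18]{Miller:Hypergraph} deliver, in the form the paper uses, is only a dense \Gdelta set $C \subseteq \Bairespace$ together with a continuous homomorphism $\phi \from C \to B$ from $\restriction{\GzeroN}{C}$ to $G$: in the infinite-dimensional setting a hyperedge of $\GzeroN$ couples infinitely many branches through a common tail, and the construction can only secure the hyperedge condition on a comeager set of tails, which is exactly why Proposition \ref{Fsigma:Bairecategory} is formulated for homomorphisms with dense \Gdelta domain. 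The repair is immediate --- invoke the dichotomy in this weaker form and feed $C$ and $\phi$ directly into Proposition \ref{Fsigma:Bairecategory}; nothing in your argument actually uses totality --- but as written you are citing a stronger statement than the one available.

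Relatedly, the step you single out as the delicate one is in fact vacuous. Since $s_0^\N$ is the empty sequence, $\extensions{s_0^\N} = \Bairespace$, so your set $C = \union[n \in \N][\extensions{s_n^\N}]$ is all of $\Bairespace$, and indeed every $x \in \Bairespace$ is a coordinate of a hyperedge of $\GzeroN[0]$ (write $x = \emptysequence \concatenation \sequence{x(0)} \concatenation z$). Thus a total homomorphism, had you been granted one, would automatically map all of $\Bairespace$ into $B$, and your ``passage to the coordinate set'' does nothing. The genuine subtlety is not where the images land but whether the homomorphism is defined everywhere --- and it need not be, which is precisely the flexibility that Proposition \ref{Fsigma:Bairecategory} was designed to absorb.
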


\begin{proof}
  Proposition \ref{Fsigma:Bairespace} ensures that conditions (1) and (2) 
  are mutually exclusive. To see that at least one of them holds, let $G$
  denote the $\aleph_0$-dimensional dihypergraph consisting of all
  sequences $\sequence{y_n}[n \in \N]$ of points of $B$ converging to a 
  point of $X \setminus B$. We say that a set $W \subseteq X$ is \definedterm
  {$G$-independent} if $\restriction{G}{W} = \emptyset$. Note that the
  closure of every such subset of $B$ is contained in $B$. In particular, it
  follows that if $B$ is a union of countably-many $G$-independent sets,
  then it is \Fsigma. Otherwise, \Lecomte's dichotomy theorem for
  $\aleph_0$-dimensional dihypergraphs of uncountable chromatic number
  (see \cite[Theorem 1.6]{Lecomte:Hypergraph} or \cite[Theorem 18]
  {Miller:Hypergraph}) yields a dense \Gdelta set $C \subseteq \Bairespace$
  for which there is a continuous homomorphism $\phi \from C \to B$ from
  $\restriction{\GzeroN}{C}$ to $G$, in which case Proposition \ref
  {Fsigma:Bairecategory} yields a topological embedding $\pi \from
  \weakextendedBairespace \to X$ of $\Bairespace$ into $B$.
\end{proof}

There is also a parametrized form of this theorem.

\begin{theorem}[\SaintRaymond] \label{Fsigma:parametrizeddichotomy}
  Suppose that $X$ and $Y$ are \Polish spaces and $R \subseteq X
  \times Y$ is a \Borel set with \Fsigma horizontal sections. Then $R$
  is a union of countably-many analytic subsets with closed horizontal
  sections.
\end{theorem}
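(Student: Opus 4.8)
The plan is to parametrize the argument behind Theorem \ref{Fsigma:dichotomy} by running the proof of Proposition \ref{Fsigma:Bairecategory} "with a \Polish parameter," exactly as the introduction suggests: replace \Lecomte's non-parametrized $\aleph_0$-dimensional $\Gzero$ dichotomy by its parametrized counterpart (the common generalization of \cite[Theorems 18 and 31]{Miller:Survey}). Concretely, for each $x \in X$ let $B_x = \horizontalsection{R}{x} = \set{y \in Y}[\pair{x}{y} \in R]$, which is \Fsigma by hypothesis, and let $G$ be the $\aleph_0$-dimensional dihypergraph on $X \times Y$ whose members are the sequences $\sequence{\pair{x}{y_n}}[n \in \N]$ such that $\sequence{y_n}[n \in \N]$ is a sequence of points of $B_x$ converging to some point of $Y \setminus B_x$; as in the proof of Theorem \ref{Fsigma:dichotomy}, a vertical-section-wise $G$-independent subset of $R$ has the property that its closure in $X \times Y$ is again contained in $R$ and is again $G$-independent. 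The parametrized dichotomy then yields the alternative: either $R$ is a union of countably-many $G$-independent sets that are \Borel (hence, by the closure remark, analytic sets — in fact \Borel — with \Fsigma, indeed relatively closed, horizontal sections), which is conclusion we want; or there is a \Borel (even continuous) map $X \to \functions{\N}{\N}$ together with a fibered continuous homomorphism on a fiberwise dense \Gdelta set from (the relevant restriction of) $\GzeroN$ to $G$.

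The next step is to rule out the second alternative, and this is where I expect the main obstacle to lie. In the non-parametrized setting the homomorphism is converted, via Proposition \ref{Fsigma:Bairecategory}, into a topological embedding of $\Bairespace$ into $B$, which contradicts the assumption that $B$ is \Borel (a \Borel set cannot contain a closed-in-$\weakextendedBairespace$ copy of $\Bairespace$ while $\Bairespace$ fails to be \Fsigma there, by Proposition \ref{Fsigma:Bairespace}). Here the parametrized homomorphism would produce, for a suitable $x$ in a non-meager set of parameters, a topological embedding of $\Bairespace$ into $\horizontalsection{R}{x}$ witnessed inside $\weakextendedBairespace$ — contradicting that $\horizontalsection{R}{x}$ is \Fsigma. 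The delicate point is that the parametrized $\Gzero$ dichotomy of \cite[Theorem 31]{Miller:Survey} is stated for \Borel (or analytic) dihypergraphs, and one must check that $G$ as defined above is \Borel in $(X \times Y)^\N$: this uses that "$\sequence{y_n}$ converges" and "$\lim y_n \notin B_x$" are \Borel conditions, which follows from $R$ being \Borel and a standard computation with the limit operation, so this is a routine verification rather than a genuine obstruction. The genuine care is in extracting a single bad parameter $x$: the parametrized dichotomy gives the homomorphism on a comeager (or at least non-empty \Gdelta) set of parameters, and one picks any $x$ in that set to run Proposition \ref{Fsigma:Bairecategory} in the fiber over $x$, obtaining the contradiction.

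I would therefore organize the write-up as follows. First, define $G$ and check it is \Borel. Second, note the closure lemma: the closure of a $G$-independent set is $G$-independent, and a $G$-independent subset of $R$ has horizontal sections whose closures lie in $R$, so such sets (when \Borel) are exactly the "countably-many analytic subsets with closed horizontal sections" promised in the conclusion. Third, invoke the parametrized $\aleph_0$-dimensional $\Gzero$ dichotomy: in the first case we are done by the second step (the countably-many pieces witness the conclusion); in the second case, fix a parameter $x$ for which the fibered homomorphism restricts to a continuous homomorphism from $\restriction{\GzeroN}{C_x}$ into the dihypergraph on $\horizontalsection{R}{x}$ consisting of sequences converging outside $\horizontalsection{R}{x}$, with $C_x \subseteq \Bairespace$ dense \Gdelta. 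Fourth, apply Proposition \ref{Fsigma:Bairecategory} (with $X$ there being $Y$ here, and $Y$ there being $\horizontalsection{R}{x}$) to obtain a topological embedding of $\Bairespace$ into $\horizontalsection{R}{x}$ inside $\weakextendedBairespace$, and conclude via Proposition \ref{Fsigma:Bairespace}(a) that $\horizontalsection{R}{x}$ is not \Fsigma — contradicting the hypothesis on $R$. Hence the first case always obtains, proving the theorem.
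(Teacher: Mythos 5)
Your overall route is the paper's: define the fiberwise dihypergraph of convergent sequences of points of a section whose limit escapes that section, apply the parametrized $\aleph_0$-dimensional analog of the $\Gzero$ dichotomy, and in the second alternative fix a single bad parameter and feed the resulting continuous homomorphism, defined on a dense \Gdelta subset of $\Bairespace$, into Proposition \ref{Fsigma:Bairecategory} to embed $\Bairespace$ as a copy of the type forbidden by Proposition \ref{Fsigma:Bairespace}(a) into an \Fsigma section --- a contradiction. (Two harmless quibbles: your sections are indexed by $x \in X$ rather than $y \in Y$, which is just a relabeling, and the dichotomy produces one parameter together with a dense \Gdelta subset of $\Bairespace$, not a comeager set of parameters; but one parameter is all you use.) That half of the proposal matches the paper and is fine.

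The genuine gap is in the first alternative. Two assertions there are wrong. First, the closure in $X \times Y$ of a fiberwise $G$-independent subset of $R$ need not be contained in $R$ nor $G$-independent: take $R = ((0,1] \times \R) \cup \{(0,0)\}$ and $S = (0,1] \times \R$; every section of $S$ is closed, yet $\closure{S}$ meets the fiber over $0$ in all of $\R$. Second, and more importantly, $G$-independence of a piece $S \subseteq R$ only says that the closure of each section of $S$ is contained in the corresponding section of $R$; it does not make the sections of $S$ closed, or even relatively closed (if a section of $R$ is all of $Y$, every subset of it is independent). So the countably many $G$-independent \Borel sets furnished by the dichotomy are not yet ``analytic subsets with closed horizontal sections,'' and your parenthetical strengthening ``in fact \Borel'' is unwarranted. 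The missing step --- which is how the paper concludes --- is to replace each independent analytic piece $S$ by its fiberwise closure $S^* = \{(x,y) \suchthat y \in \closure{S_x}\}$ (in your orientation): independence gives $S^* \subseteq R$, its sections are closed by construction, and the key observation is that $S^*$ is analytic whenever $S$ is, since $y \in \closure{S_x}$ if and only if for every basic open $V \subseteq Y$ either $y \notin V$ or there is $y' \in V$ with $(x,y') \in S$, a countable intersection of sets each of which is the union of a closed set with the cylinder over a projection of an analytic set. This fiberwise-closure argument is precisely why the theorem promises analytic, rather than \Borel, pieces; without it your first alternative does not yield the stated conclusion.
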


\begin{proof}
  The parametrized form of our earlier dihypergraph is given by
  \begin{equation*}
    \textstyle
    G = \set{\pair{\sequence{x_n}[n \in \N]}{y} \in \functions{\N}
      {(\horizontalsection{R}{y})} \times Y}[\exists x \in X \setminus
        \horizontalsection{R}{y} \ x = \lim_{n \goesto \infty} x_n].
  \end{equation*}
  We say that a set $S \subseteq X \times Y$ is \definedterm
  {$G$-independent} if $\horizontalsection{S}{y}$ is $\horizontalsection
  {G}{y}$-independent, for all $y \in Y$. Note that the closure of every
  horizontal section of every such subset of $R$ is contained in the
  corresponding horizontal section of $R$. Moreover, if $S \subseteq R$
  is analytic, then so too is the set $\set{\pair{x}{y} \in X \times Y}
  [x \in \closure{\horizontalsection{S}{y}}]$. In particular, it follows that if
  $R$ is a union of countably-many $G$-independent analytic subsets,
  then it is a union of countably-many analytic subsets with closed
  horizontal sections. Otherwise, the parametrized form of the dichotomy
  theorem for $\aleph_0$-dimensional dihypergraphs of uncountable \Borel
  chromatic number (i.e., the straightforward common generalization of
  \cite[Theorems 18 and 31]{Miller:Survey}) yields a dense \Gdelta set
  $C \subseteq \Bairespace$ and $y \in Y$ for which there is a
  continuous homomorphism $\phi \from C \to \horizontalsection{R}{y}$
  from $\restriction{\GzeroN}{C}$ to $\horizontalsection{G}{y}$, in which
  case Proposition \ref{Fsigma:Bairecategory} yields a continuous
  embedding $\pi \from \weakextendedBairespace \to X$ of
  $\Bairespace$ into $\horizontalsection{R}{y}$. As the latter is \Fsigma,
  this contradicts Proposition \ref{Fsigma:Bairespace}.
\end{proof}

Our use of this result is via the following corollary.

\begin{theorem} \label{Fsigma:functiondichotomy}
  Suppose that $X$ and $Y$ are \Polish spaces and $f \from X \to
  Y$ is an \Fsigma-to-one \Borel function. Then $f$ is strongly
  $\sigma$-closed-to-one.
\end{theorem}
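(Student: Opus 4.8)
The plan is to reduce the statement directly to Theorem \ref{Fsigma:parametrizeddichotomy} by taking $R$ to be the graph of $f$, read in the correct order of coordinates. Precisely, since $f$ is \Fsigma-to-one, each fibre $\preimage{f}{y}$ is \Fsigma, so the set
\begin{equation*}
  R = \set{\pair{x}{y} \in X \times Y}[f(x) = y] = \graph{f}
\end{equation*}
is a \Borel (indeed \Borel, as $f$ is \Borel and $Y$ is \Polish) subset of $X \times Y$ whose horizontal section $\horizontalsection{R}{y}$ over each $y \in Y$ is exactly $\preimage{f}{y}$, hence \Fsigma. Thus $R$ satisfies the hypotheses of Theorem \ref{Fsigma:parametrizeddichotomy}, which yields a cover $R = \union[n \in \N][S_n]$ by analytic sets $S_n \subseteq X \times Y$ each of whose horizontal sections $\horizontalsection{(S_n)}{y}$ is closed in $X$.

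The second step is to push this decomposition of the graph back down to a decomposition of the domain $X$. For each $n \in \N$, set $A_n = \projection[X]{S_n} = \set{x \in X}[\pair{x}{f(x)} \in S_n]$; equivalently $A_n$ is the preimage of $S_n$ under the graph map $x \mapsto \pair{x}{f(x)}$, which is a continuous injection of $X$ into $X \times Y$ (continuous because $f$ is \Borel — here we may first pass to a finer \Polish topology on $X$ making $f$ continuous, as in \cite[Theorem 13.11]{Kechris}, which changes neither the \Borel structure nor analyticity nor which sets are closed-to-one in the relevant sense; alternatively note directly that $A_n$ is analytic as the projection of the analytic set $S_n \intersection \graph{f}$). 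Since $R = \graph{f}$ and the $S_n$ cover $R$, we get $X = \union[n \in \N][A_n]$, and each $A_n$ is analytic. It remains to check that $A_n \intersection \preimage{f}{y}$ is closed in $X$ for every $y \in Y$: but a point $x$ lies in this set exactly when $f(x) = y$ and $\pair{x}{y} \in S_n$, i.e. exactly when $x \in \horizontalsection{(S_n)}{y} \intersection \preimage{f}{y}$, and $\horizontalsection{(S_n)}{y}$ is closed by construction while $\preimage{f}{y} = \preimage{f}{\set{y}}$ is closed whenever $f$ is continuous (again using the refined topology) — in any case $\horizontalsection{(S_n)}{y} \intersection \preimage{f}{y} = \horizontalsection{(S_n)}{y}$ since $S_n \subseteq R = \graph{f}$ forces $\pair{x}{y} \in S_n \implies f(x) = y$. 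Hence $A_n \intersection \preimage{f}{y} = \horizontalsection{(S_n)}{y}$ is closed, which is precisely the statement that $f$ is strongly $\sigma$-closed-to-one.

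The only real subtlety — and the step I expect to require the most care — is the bookkeeping around continuity of the graph map and the sense in which the fibres are closed: the definition of \emph{strongly $\sigma$-closed-to-one} asks that each $A_n$ be analytic and that $A_n \intersection \preimage{f}{y}$ be a closed subset of $X$, and one must make sure that "closed" is being taken in the original \Polish topology on $X$, not merely in some refinement. This is handled by the observation above that $A_n \intersection \preimage{f}{y}$ literally equals the horizontal section $\horizontalsection{(S_n)}{y}$ of the graph, which Theorem \ref{Fsigma:parametrizeddichotomy} delivers as a closed subset of $X$ in its given topology; no topology change is actually needed for this identification, and the analyticity of $A_n$ follows since the graph of a \Borel function is \Borel, so $A_n$ is the projection to $X$ of the analytic (in fact \Borel) set $S_n \intersection \graph{f} = S_n$. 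Everything else is a routine unravelling of definitions.
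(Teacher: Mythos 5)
Your proposal is correct and follows essentially the same route as the paper: apply Theorem \ref{Fsigma:parametrizeddichotomy} to $R = \graph{f}$, whose horizontal sections are the fibres of $f$, and observe that the projections to $X$ of the resulting analytic pieces meet each fibre in exactly the corresponding closed horizontal section. The digression about passing to a finer topology and the continuity of the graph map is unnecessary, as you yourself note, since analyticity of the projections and the identity $A_n \intersection \preimage{f}{y} = \horizontalsection{(S_n)}{y}$ already give everything in the original topology.
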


\begin{proof}
  As the set $R = \graph{f}$ is \Borel (see, for example, \cite
  [Proposition 12.4]{Kechris}) and has \Fsigma horizontal sections,
  an application of Theorem \ref{Fsigma:parametrizeddichotomy}
  ensures that it is a union of countably-many analytic sets with
  closed horizontal sections. As the projections of these sets onto
  $X$ intersect the preimage of each singleton in a closed set, it
  follows that $f$ is strongly $\sigma$-closed-to-one.
\end{proof}

\subsection*{Acknowledgements}
We would like to thank the anonymous editor and referee for several
suggestions that improved the clarity of the exposition. The authors
were supported in part by FWF Grant P28153.

\end{document}